\title{Cyclicity in families of circle maps}
\author{O. Kozlovski}
\newtheorem{theorem}{Theorem}[section]
\newtheorem{mytheorem}{Theorem}
\newtheorem{proposition}{Proposition}[section]
\newtheorem{lemma}{Lemma}[section]
\theoremstyle{definition}
\newtheorem{definition}{Definition}
\newenvironment{theorem-proof}[1]
{\underline{Proof of Theorem~\ref{#1}} \newline}
{\hfill {$\Box$}}
\newfont{\Bb}{msbm10}
\newcommand{\N}{\mbox{\Bb N}}
\newcommand{\Z}{\mbox{\Bb Z}}
\newcommand{\R}{\mbox{\Bb R}}
\newcommand{\C}{\mbox{\Bb C}}
\newcommand{\ic}{\mbox{\cal IC}}
\newcommand{\supp}{\mathop{\mbox{supp}}}
\newcommand{\aparam}{a}
\begin{document}

\maketitle

\begin{abstract}
  In this paper we will study families of circle maps of the form
  $x\mapsto x+2\pi r + \aparam f(x) \pmod{2\pi}$ and investigate how many periodic
  trajectories maps from this family can have for a ``typical''
  function $f$ provided the parameter $\aparam$ is small.
\end{abstract}

\section{Introduction}
\label{sec:introduction}

The Hilbert-Arnold problem (\cite{arnold1985}, see also discussion in
\cite{ilyashenko2002}) asks if a generic family of planar vector
fields has a uniformly bounded number of isolated limit cycles.
Originally this problem was formulated just for planar vector fields
(or vector fields on 2 dimensional sphere), however the same question
can be posed for vector fields on other manifolds or even for families
of a maps from some manifold to itself.  In this paper we consider a
case of specific families of diffeomorphisms of the circle.

Consider a family of circle diffeomorphisms of this form:
$$
F_{r,\aparam} : x \mapsto x+2\pi r + \aparam f(x) \pmod{2\pi},
$$
where $f$ is some periodic function, $r \in \R$, $\aparam \in
[-\aparam_0,\aparam_0]$. For a given $2\pi$-periodic function $f$
this family will be called the \emph{corresponding family}.

A number of people studied such families for different choices of the
function $f$. If $f=\sin$, the family $F$ is usually called Arnold's
family.

For these families we will study periodic trajectories which originate
from periodic points of the rigid rotation and
investigate whether for a ``typical'' family $F$ there is a bound on
the number of periodic trajectories which are born when
$\aparam$ is small. In the original Hilbert-Arnold problem ``typical'' means
Baire generic. Here we will consider several other notions of
typicality and we will see that the answer might depend on what notion
of typicality is used.

We will see that in many cases a ``typical'' family will have infinite
cyclicity: this means that for any $N\in \N$ there exist parameter
values $r$ and $a$ such that the map $F_{r,a}$ has more than $N$
attracting periodic trajectories. On the other hand, families which
have finite cyclicity certainly exist: if $f$ is a trigonometric
polynomial of degree $d$, then the number of attracting periodic
trajectories of the map $F_{r,a}$ is bounded by $d$, see \cite{yakobson1985}. 

All the questions we pose here for the specific family $F_{r,\aparam}$
can be asked for general families of circle maps. Interestingly enough
a generic non trivial family of $C^k$ circle diffeomorphisms has
infinite cyclicity. This follows from Herman's theorem though to the
best of my knowledge it cannot be found in the literature. It seems
that such a statement holds only for families of diffeomorphisms. I
conjecture that a generic family of critical circle maps (i.e. maps
which have points where the derivative of the map is zero) has finite
cyclicity. The critical points in such families do not create
significant problems for the proof of this conjecture because periodic
attractors near critical points can be controlled by the negative
Schwarzian derivative condition (see \cite{kozlovski2000}). On the
other hand, periodic attractors of high period born in a perturbation
of a neutral fixed point are hard to analyse.

{\it Acknowledgements.\ } The author would like to thank D. Preiss for a
number of very useful discussions.

\section{Statements of results}
\label{sec:null-sets-1}

For a diffeomorphism $g$ of a circle let $N(g)$ denote the
number of attracting periodic trajectories of the map
$g$ and let $\rho(g)$ denote the rotation number of $g$. For the
family $F_{r,\aparam}: x \mapsto x+2\pi r + \aparam f(x) \pmod{2\pi}$ of circle maps define
$$
B(F,\rho_0)=\limsup_{\aparam_1 \to 0+}
\sup_{\rho(F_{r,\aparam})=\rho_0,\, |\aparam|<\aparam_1}
N(F_{r,\aparam}).
$$
So, $B$ quantifies how many periodic attractors of a given rotation
number are born when we perturb the rigid rotation.

\begin{definition}
  The family of circle maps $F_{r,\aparam} : x \mapsto x+2\pi
  r+\aparam f(x) \pmod{2\pi}$ is said to have finite infinitesimal cyclicity if
$B(F, \rho)$ is uniformly bounded for all $\rho$. Otherwise, $F$ has
infinite infinitesimal cyclicity.
\end{definition}

We will study cyclicity of families $F$ in different functional
spaces and use the following notation. Let $X$ be some space of
real functions on the circle. $\ic_N(X)\subset X$ will denote a set of
functions $f$ in $X$ whose corresponding families  $F_{r,\aparam} : x \mapsto x+2\pi r+\aparam
f(x) \pmod{2\pi}$ 
have infinitesimal cyclicity bounded by $N$, i.e. $B(F,\rho)\le
N$ for all $\rho \in \R$.
 $\ic_{\mbox{{\small fin}}}(X)=\cup_{N=1}^\infty \ic_N(X)$
will denote functions whose corresponding families have finite
infinitesimal cyclicity and $\ic_\infty(X)=X\setminus \ic_{\mbox{{\small fin}}}(X)$
-- functions whose corresponding families have infinite cyclicity.

\subsection{Topologically small sets}
\label{sec:topol-small-sets}

A standard way to define a topologically small set is to set a
subset $E$ of a topological space $X$ to be small if $E$ is a
countable union of nowhere dense subsets of $X$. Such a subset $E$ is
also called \emph{meagre}. The complement of a meagre set is called a
\emph{residual} set and if some property holds for all elements of a
residual set, we say that this property is Baire generic.

Our first theorem states that the infinite cyclicity is a Baire
generic property:
\begin{mytheorem} \label{thr:iic}
  The set $\ic_\infty(C^k(S))$,
  $k=2,\ldots$, as well as the set
  $\ic_\infty(C^\omega_\delta(S))$, is residual.

  In other words, for a Baire generic function $f \in C^k(S)$,
  $k=2,\ldots,\omega$, the corresponding family $F$ has
  infinite infinitesimal cyclicity.
\end{mytheorem}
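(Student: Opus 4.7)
The plan is to produce a dense $G_\delta$ subset of $\ic_\infty(C^k(S))$. For $\gcd(p, q) = 1$, define the first-order obstruction
$$\phi_{p/q}^f(x) := \sum_{k=0}^{q-1} f\bigl(x + 2\pi p k / q\bigr),$$
and let $M(f, q)$ denote the number of nondegenerate local maxima of $\phi_{p/q}^f$ in one fundamental period $[0, 2\pi/q]$ (the value is independent of the choice of $p$). Set $W_N := \{f \in C^k(S) : \exists\, q \in \N,\ M(f, q) > N\}$. For each fixed $q$, the condition $M(f, q) > N$ is open in $C^2$ (hence in $C^k$) by the implicit function theorem applied to the zeros of $(\phi_{p/q}^f)'$, so $W_N$ is open. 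The standard first-order expansion
$$F^q_{p/q + \aparam s,\, \aparam}(x) - x - 2\pi p = \aparam\bigl(2\pi q s + \phi_{p/q}^f(x)\bigr) + O(\aparam^2)$$
shows that, for any regular value $-2\pi q s$ of $\phi_{p/q}^f$ lying strictly between its interleaved local minima and maxima, the fixed-point equation of $F^q$ has $2 M(f, q)$ transverse solutions for all sufficiently small $\aparam$, $M(f, q)$ of which give attracting periodic orbits. Hence $B(F, p/q) \ge M(f, q)$, so $\bigcap_N W_N \subset \ic_\infty$.

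The density of $W_N$ is the main step. Given $f_0 \in C^k(S)$ and $\epsilon > 0$, apply Jackson's theorem to choose a trigonometric polynomial $\tilde f_0$ of degree $d$ with $\|f_0 - \tilde f_0\|_{C^k}$ much smaller than $\epsilon/(N+1)^k$. For any $q > d$, every non-zero Fourier mode of $\tilde f_0$ cancels under the $q$-averaging, so $\phi_{p/q}^{\tilde f_0}$ is constant. Add the high-frequency bump
$$\delta f(x) := \beta \cos\bigl((N+1) q x\bigr), \qquad \beta := \frac{\epsilon/2}{((N+1) q)^k},$$
so that $\|\delta f\|_{C^k} < \epsilon/2$ while $\phi_{p/q}^{\delta f}(x) = \beta q \cos((N+1) q x)$ carries $N+1$ nondegenerate local maxima in $[0, 2\pi/q]$. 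The residual $\phi_{p/q}^{f_0 - \tilde f_0}$ has oscillating part of $C^0$-norm at most of order $\|f_0 - \tilde f_0\|_{C^k}/q^{k-1}$ (by Fourier-coefficient decay), which is negligible against the signal amplitude $\beta q = \epsilon/(2(N+1)^k q^{k-1})$ precisely because $\|f_0 - \tilde f_0\|_{C^k} \ll \epsilon/(N+1)^k$. Analogous control at the critical points (via Fourier decay for $k \ge 4$ and via Riemann-sum estimates for $k = 2, 3$) ensures the nondegeneracy of the maxima survives. Thus $f := f_0 + \delta f$ satisfies $M(f, q) \ge N+1$ and $\|f - f_0\|_{C^k} < \epsilon$, so $f \in W_N$.

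The $C^\omega_\delta$ case is analogous: the perturbation $\delta f$ is itself analytic, and Jackson's theorem is replaced by truncation of the Laurent expansion of $f_0$ in the complex strip. The main obstacle is exactly the density step. The background $\phi_{p/q}^{f_0 - \tilde f_0}$ and the signal $\phi_{p/q}^{\delta f}$ both decay with $q$ at the comparable rate $q^{-(k-1)}$, so one cannot simply take $q$ large to dominate the background; instead one must first approximate $f_0$ to accuracy $\epsilon/(N+1)^k$ \emph{before} introducing the perturbation, and then maintain $C^2$-control rather than merely $C^0$-control near the critical points so that the nondegeneracy of the $N+1$ maxima is preserved under the residual oscillation.
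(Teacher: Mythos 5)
Your overall architecture (open sets indexed by a cyclicity bound, density obtained from a trigonometric approximation plus one high harmonic, first-order averaging to produce the attractors) is the same as the paper's, but there is a genuine gap in the step that makes $\bigcap_N W_N$ useful. You pass from $M(f,q)$, the number of nondegenerate local maxima of $\phi^f_{p/q}$, to $B(F,p/q)\ge M(f,q)$ by choosing a regular value ``lying strictly between its interleaved local minima and maxima''. For a general $f\in\bigcap_N W_N$ no such level need exist: the critical values of a $2\pi/q$-periodic profile need not interleave. For instance, a profile whose successive maxima are $3,5,7,\dots,2M+1$ and whose intervening minima are $2,4,6,\dots$, closed up by one large drop, has $M$ nondegenerate maxima, yet every level is crossed only $O(1)$ times, so the first-order analysis produces only $O(1)$ attracting orbits however large $M$ is. Thus the inclusion $\bigcap_N W_N\subset \ic_\infty(C^k(S))$, on which the whole Baire argument rests, is not established as written. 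The repair is to define $W_N$ by the quantity that actually counts orbits: the maximum over levels $c$ of the number of transverse (or sign-changing) solutions of $\phi^f_{p/q}(x)=c$; this is still an open condition, and your constructed perturbations satisfy it because there the averaged profile is a single dominant harmonic plus a relatively small term. This is precisely what the paper controls, not through critical points but through a quantitative Sturm--Hurwitz theorem giving a lower bound on sign changes from Fourier-coefficient inequalities, which simultaneously yields the openness (robustness under $C^0$-small perturbation of the finitely many relevant coefficients).

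A second, smaller, gap sits in your density step. The paper perturbs the truncation itself, i.e.\ uses $\tilde f_0 + c\sin((N+1)dx)$, which is still within $\epsilon$ of $f_0$, so the $q$-averaged function is exactly a constant plus one harmonic and no residual estimates are needed. By adding the bump to $f_0$ you must beat the averaged residual $\phi^{f_0-\tilde f_0}_{p/q}$, and for $k=2,3$ you only assert this (``Riemann-sum estimates''). It can be made to work --- once you argue via level crossings you only need $C^0$ control of the oscillating part (available for all $k\ge 2$ from coefficient decay), and even for nondegenerate maxima a $C^0$ plus $C^2$ bound suffices (trivial bound of order $q\|f_0-\tilde f_0\|_{C^2}$ for $k=2$, a bounded-variation type estimate for $k=3$), with no $C^1$ control required --- but as written this is left unproved, and the nondegenerate-maxima formulation makes the needed control look stronger than it is. Finally, in the analytic case truncation of the Fourier series is not a small perturbation in the $C^\omega_\delta(S)$ sup-norm on the strip (trigonometric polynomials are not dense there); it is also unnecessary, since the coefficients of $f_0$ at multiples of $q$ are already $O(e^{-\delta q})$ and hence negligible against the inserted harmonic once $q$ is large.
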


Here $C^\omega_\delta (S)$ denotes the space of real analytic $2\pi$
periodic functions which can be analytically continued to the strip
$\R \times (-\delta i, \delta i) \subset \C$.  The norm in this space
is given by
$$
\|f\|_{C^\omega_\delta(S)}=\sup_{Im(z)<\delta} |f(z)|.
$$

\subsection{$\Gamma_b^l$ prevalence}
\label{sec:metric-prevalence}

It is well known that  large in topological sense sets can have
zero Lebesgue measure and be invisible from ``probabilistic'' point of
view. The most known example here is the set of Liouville numbers: this
set is residual but has Lebesgue measure zero.

The space of circle functions is infinite dimensional and there is no
natural definition of the Lebesgue measure in infinite dimensional spaces.
Thus, there is no natural (or unique) way to define zero Lebesgue
measure sets. Several different approaches to define metric prevalent
sets have been suggested, we will study a few of them and investigate their
relations to the cyclicity of families of circle maps.

First we consider a definition of a metric prevalent set which employs
a mixture of the topological and finite dimensional metric prevalence.

\begin{definition}
  Let $X$ be some vector space. Let $I^b$ denote a unit cube in
  $\R^b$.  A subset $P\subset X$ is called
  \emph{$\Gamma^l_b$-prevalent} if for a generic $C^l$ family $g: I^b
  \to X$ the set $g^{-1}(P)$ has full Lebesgue measure.
\end{definition}

Though this notion of the prevalence is arguably most popular in
dynamics, $\Gamma_b^l$ prevalent sets cannot be considered as a true
generalisation of full Lebesgue measure sets: one can construct a zero
Lebesgue measure set in $\R^2$ which is $\Gamma^l_1$-prevalent, see
\cite{preiss2012} for the details.

In our setting the space $X$ is a space of circle functions, so to make
use of the $\Gamma^l_b$ prevalence we will consider families with values
in $X$ as in the paragraph above. On the other hand, for a map in
$X$ we will also consider the corresponding family of circle maps. Notice
that we use the same word ``family'' in these two different settings,
however we hope this will not cause any confusion.

\begin{mytheorem} \label{thr:ck-ic}
 For a Baire generic $C^l$   family
$f:I^b \to C^k(S)$ for every parameter $t\in I^b$ the infinitesimal
cyclicity of the corresponding family $x\mapsto x+2\pi r+\aparam
f_t(x)$ is infinite. Here $b \ge 1$, $l \ge 0$, $k=2,\ldots,
\omega$\footnote{if $k=\omega$, then we mean the space
  $C^\omega_\delta(S)$ for some $\delta >0$.}.
\end{mytheorem}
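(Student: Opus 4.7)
The plan is a Baire-category argument on the Banach (or Fr\'echet, when $l=\infty$) space $Y = C^l(I^b, C^k(S))$. Set
$$ W_N = \{f \in Y : f(I^b) \cap \ic_N(C^k(S)) = \emptyset\}. $$
The condition ``$f_t \in \ic_\infty$ for every $t \in I^b$'' is exactly $f \in \bigcap_{N \ge 1} W_N$, so it suffices to verify that each $W_N$ is open and dense in $Y$; then their intersection is residual.

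Openness follows from two ingredients. First, $\ic_N$ is closed in $C^k(S)$: if $f$ has cyclicity larger than $N$, then there exist arbitrarily small $a$ and some $r$ for which $F_{r,a}$ has more than $N$ hyperbolic attracting periodic trajectories, and these persist under $C^k$-perturbations of $f$. Second, $I^b$ is compact, so any $f \in W_N$ enjoys $\dist_{C^k}(f(I^b), \ic_N) > 0$, and a $C^l$-neighbourhood of $f$ of that radius remains inside $W_N$.

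The main step is density. Given $f \in Y$, $\epsilon > 0$ and $N$, I appeal to Theorem~\ref{thr:iic} (or rather, to its proof, which yields that $\ic_N$ is closed and nowhere dense) to conclude that $\ic_N^c$ is open and dense in $C^k(S)$. For each $t_0 \in I^b$ I pick $g_{t_0} \in \ic_N^c$ close to $f_{t_0}$ and an open ball $B(g_{t_0}, \delta_{t_0}) \subset \ic_N^c$, then a neighbourhood $U_{t_0}$ of $t_0$ on which $\|f_t - f_{t_0}\|_{C^k}$ is controllably small. By compactness, finitely many $U_{t_i}$, $i=1,\dots,m$, cover $I^b$; with $\delta_* = \min_i \delta_{t_i}$ and a $C^l$ partition of unity $\{\phi_i\}$ subordinate to $\{U_{t_i}\}$, define
$$ f'(t) = f(t) + \sum_{i=1}^m \phi_i(t)\,(g_{t_i} - f_{t_i}). $$
Choosing each $\|g_{t_i} - f_{t_i}\|_{C^k}$ small enough relative to $\delta_*$ and to the $C^l$-norms of the $\phi_i$ simultaneously gives $\|f' - f\|_{C^l} < \epsilon$ and, for every $t \in U_{t_i}$, places $f'(t)$ in $B(g_{t_i}, \delta_{t_i}) \subset \ic_N^c$; the relevant identity, using $\sum_j \phi_j(t) = 1$, is
$$ f'(t) - g_{t_i} = (f_t - f_{t_i}) + \sum_{j}\phi_j(t)\bigl[(g_{t_j}-f_{t_j})-(g_{t_i}-f_{t_i})\bigr]. $$

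The principal obstacle is precisely this patching: cyclicity is neither linear nor convex, so a smooth interpolation of ``good'' functions need not itself be good. The way out is to lean on the \emph{openness}, not merely the density, of $\ic_N^c$ in $C^k(S)$: by making the local perturbations very small relative to $\delta_*$, whatever $C^l$-smoothed combination the partition of unity produces still lies inside one of the fixed open balls $B(g_{t_i}, \delta_{t_i})$, and density of $W_N$ follows.
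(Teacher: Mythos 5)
The proposal breaks at both of its hinges, and for the same underlying reason: nothing in the paper, and nothing in your argument, shows that $\ic_N(C^k(S))$ is closed. The proof of Theorem~\ref{thr:iic} only shows that $\ic_N(C^k(S))$ is \emph{nowhere dense}: near any $f$ it exhibits one special function $\hat f$ together with a ball around $\hat f$ avoiding $\ic_N(C^k(S))$, which is strictly weaker than openness of the complement. Your persistence argument for closedness does not hold as stated: $N(g)$ counts attracting trajectories, which need not be hyperbolic, and even for hyperbolic ones the quantity $B(F,\rho)$ is defined through a limit $\aparam\to 0$, so a fixed $C^k$-perturbation of $f$ would have to preserve more than $N$ attractors along a whole sequence $\aparam_j\to 0$, which a perturbation of fixed size does not guarantee. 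Without closedness, the step ``the compact image $f(I^b)$ has positive distance to $\ic_N$'' fails, so openness of your $W_N$ is unproved; and the same unproved claim (``$\ic_N^c$ is open'') is what your density step appeals to.

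Even if you replace $\ic_N^c$ by the genuinely open dense set $U_N$ that the proof of Theorem~\ref{thr:iic} does provide, the patching step is circular. You need $\max_j\|g_{t_j}-f_{t_j}\|$ to be small compared with $\delta_*=\min_i\delta_{t_i}$, but whenever $f_{t_i}\in \ic_N(C^k(S))$ --- which is exactly the interesting case, e.g.\ the constant family $f_t\equiv 0$, all of whose members give rigid rotations and lie in every $\ic_N$ --- one has $\delta_{t_i}\le\|g_{t_i}-f_{t_i}\|$, since the ball $B(g_{t_i},\delta_{t_i})$ must avoid $f_{t_i}$. Hence the local perturbations can never be made small relative to $\delta_*$, and in an overlap where $\phi_1(t)\approx\phi_2(t)\approx 1/2$ the interpolated function $\frac12(g_{t_1}+g_{t_2})$ need not lie in any of the chosen balls; the nonlinearity obstacle you flag is real and ``leaning on openness'' does not remove it. The paper's proof avoids patching altogether: by Lemma~\ref{lm:uniform} (compactness of $I^b$ plus Riemann--Lebesgue) there is a single $N$ with $|{\tilde f}_n^{(m)}(t)|\le\epsilon_0|n|^{-k}$ for all $t$ and $|n|\ge N$, and then one global modification of the family --- subtracting the resonant modes ${\tilde f}_{\pm mN}(t)$, $m\le d+1$, and adding the fixed term $\epsilon N^{-k}(d+1)^{-k-1}\sin(N(d+1)x)$ --- makes Proposition~\ref{pr:ck} and Theorem~\ref{thr:sh} applicable for \emph{every} $t$ simultaneously, robustly under $C^{l,k}$-small perturbation of the family. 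This shows directly that the families meeting $\ic_d$ at even one parameter value form a nowhere dense set, with no need for $\ic_d$ to be closed. To repair your write-up you would need either a proof that $\ic_N(C^k(S))$ is closed (which is not available) or a uniform single-perturbation construction of this kind.
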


Notice that the claim of this theorem is stronger than just
the $\Gamma^l_b$ prevalence: the corresponding family has infinite infinitesimal
cyclicity for \emph{every} value of the parameter.

\subsection{Haar null sets}
\label{sec:null-sets}

Another way do describe ``small'' sets in infinite dimensional spaces
was suggested in \cite{christensen1972} and \cite{hunt1992}. Let $X$
be a complete separable normed linear space.

\begin{definition}
  A Borel subset $E$ of $X$ is called a \emph{Haar null set} if there
  is a Borel probability measure $\mu$ such that
  $\mu(x+E)=0$ for all $x\in X$.
\end{definition}

In \cite{hunt1992} such sets are called \emph{shy}. The complement of a
Haar null set will be called \emph{Haar prevalent} to avoid confusion
with prevalence in the $\Gamma^l_b$ sense.

Haar null sets have many properties which zero Lebesgue measure sets
enjoy in finite dimensional spaces, e.g. the countable union of Haar
null sets is a Haar null set as well. See \cite{hunt1992},
\cite{benyamini2000}, \cite{ott2005} for more
information.

The next theorem shows that in the finite smoothness case the infinite cyclicity
is Haar prevalent.

\begin{mytheorem} \label{thr:cknull}
  The set $\ic_\infty(C^k(S))$ of functions $f$
  such that the corresponding families $F_{r,\aparam} :x \mapsto
  x+2\pi r+\aparam f(x) \pmod{2\pi}$ 
  have infinite infinitesimal cyclicity is Haar prevalent.
  Here $k=2,3,\ldots$.
\end{mytheorem}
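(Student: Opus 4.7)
Since Haar null sets form a $\sigma$-ideal (the countable union of Haar null sets is Haar null) and $\ic_{\mbox{\small fin}}(C^k(S))=\bigcup_{N\ge 1}\ic_N(C^k(S))$, it suffices to prove that each $\ic_N(C^k(S))$, $N\in\N$, is Haar null. Fix $N$, set $L:=N+1$, and the task reduces to constructing a Borel probability measure $\mu_N$ on $C^k(S)$ with $\mu_N(f+\ic_N)=0$ for every $f\in C^k(S)$.

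The probe is built as follows. Choose a rapidly growing sequence of positive integers $q_1<q_2<\cdots$ so that the sets $\{jq_n:1\le j\le L\}$ are pairwise disjoint, together with a positive summable sequence $(w_n)$. Setting $\Omega:=\prod_n[-1,1]^{2L}$ with coordinates $\omega_n=(t_n^j,s_n^j)_{j=1}^L$, define
\[
v_\omega(x):=\sum_{n\ge 1}w_n\sum_{j=1}^{L}\frac{t_n^j\cos(jq_n x)+s_n^j\sin(jq_n x)}{(jq_n)^k}\in C^k(S),
\]
and let $\mu_N$ be the pushforward under $\omega\mapsto v_\omega$ of the product Lebesgue measure on $\Omega$. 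The series converges in $C^k(S)$ because $\sum_n w_n<\infty$ and the inner sums are uniformly bounded in $C^k$-norm.

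For $r$ near $p/q_n$ and $\aparam$ small, a standard Melnikov/averaging computation shows that the $q_n$-periodic points of $F_{r,\aparam}^{f+v_\omega}$ are, up to $O(\aparam^2)$, zeros of $2\pi q_n(r-p/q_n)+\aparam G_{q_n}(x)$, where $G_{q_n}(x)=q_n\sum_{m\ne 0}c_{mq_n}(f+v_\omega)e^{imq_n x}$. After the rescaling $y=q_n x$ this becomes the count of solutions of $h_n(y)=C$ for $h_n(y):=\sum_{m\ne 0}c_{mq_n}(f+v_\omega)e^{imy}$; whenever $h_n$ attains some value at $2L$ simple points, one obtains $L=N+1$ attracting periodic orbits of $F_{r,\aparam}^{f+v_\omega}$ for a suitable $r$ close to $p/q_n$ and all sufficiently small $\aparam$, forcing $f+v_\omega\notin\ic_N$. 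Frequency disjointness means that $\omega_n$ affects only the coefficients $c_{jq_n}(f+v_\omega)$ with $1\le j\le L$, so the ``bad'' event $B_n\subset[-1,1]^{2L}$ that $h_n$ has no $2L$-crossing level depends solely on $\omega_n$. The key input is that for $n\ge n_0$ (chosen after $f$) the Lebesgue measure of $B_n$ is bounded by $(1-\varepsilon)\cdot 4^L$ for some $\varepsilon>0$ independent of $n$, because a neighborhood of the pure mode $h_n(y)\propto\cos(Ly)$ belongs to the complement of $B_n$ and the tail $\sum_{m>L}c_{mq_n}(f)e^{imy}$ tends to zero by Riemann--Lebesgue. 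Independence of the $\omega_n$'s under product measure then gives
\[
\mu_N\bigl(f+\ic_N\bigr)\;\le\;\prod_{n\ge n_0}\frac{|B_n|}{4^L}\;\le\;\prod_{n\ge n_0}(1-\varepsilon)\;=\;0.
\]

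The main technical hurdle is the geometric estimate on $|B_n|$: the probe cube is bounded while the ``centre'' of the coefficient configuration in $\C^L$, given by $((jq_n)^k c_{jq_n}(f))_{j=1}^L$, may be as large as $\|f\|_{C^k}$, so one must show that a definite fraction of the probe cube still meets the (homogeneous, open) set of good configurations producing a $2L$-crossing level set, uniformly in the size of this shift and in $n$. This is a nontrivial real-analytic statement about level sets of trigonometric polynomials of degree $L$ perturbed by a small tail. A secondary, more routine step is verifying the Borel measurability of $\ic_N$ (which follows by expressing $B(F,\rho)\le N$ through countable conditions on the iterates $F_{r,\aparam}^q$ at rational $\rho$) and controlling uniformly the $O(\aparam^2)$ corrections in the Melnikov expansion so that the simple-crossing count translates into an honest count of attracting periodic orbits for genuine small $\aparam$.
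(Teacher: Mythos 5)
Your overall architecture (reduce to showing each $\ic_N(C^k(S))$ is Haar null, probe with random high-frequency Fourier blocks, use a Melnikov/first-order expansion plus a Sturm--Hurwitz-type crossing count, and exploit independence across blocks) matches the paper's strategy, but the step you yourself flag as the ``main technical hurdle'' is not a hurdle to be filled in later --- for the probe you chose it is false, and it is exactly the point the paper's construction is built to get around. Because you use uniform coefficients on every block, every realization of $v_\omega$ must lie in $C^k(S)$, which forces the normalized block amplitudes $w_n$ to be summable. But then there is a $C^k$ translate $f$ whose normalized coefficients at your probe frequencies dominate $w_n$ for all large $n$: take lacunary $q_n$, put $c_{q_n}(f)=a_n q_n^{-k}$ with $a_n=w_n/\sqrt{\sum_{m\ge n}w_m}$ (summable, yet $a_n/w_n\to\infty$), and nothing else. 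For such $f$ the rescaled function $h_n$ is, for every point of the probe cube, a first harmonic of size $a_n$ plus a perturbation of size $O(L^3 w_n)$ in $C^2$, so $h_n'$ has exactly two zeros and no level is attained more than twice; hence $B_n$ is the \emph{entire} cube for all large $n$, your claimed uniform bound $|B_n|\le(1-\varepsilon)4^L$ fails, and the product argument yields nothing. Riemann--Lebesgue gives no rate, so no choice of summable $w_n$ escapes this: a bounded probe cannot uniformly beat an arbitrary shift sitting on the same frequencies.

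The paper resolves precisely this tension with a differently shaped measure. Its random coefficients sit only at frequencies $N^{2m}$ and are \emph{sparse}: the coefficient is nonzero only with probability $1/m$, but when nonzero its normalized size is $1/\sqrt m$, which is \emph{not} summable (almost-sure membership in $C^k$ is still guaranteed because the expectations are summable, via Markov's inequality). Moreover, the comparison (lower-harmonic) frequencies $N^{2m-1}d$, $d\le N$, are disjoint from the random frequencies, so the Sturm--Hurwitz threshold $c_m$ is a deterministic quantity depending only on the translate $f$; the bad event per block is simply $|\tilde g_{N^{2m}}|\le c_m$. Finally, Parseval's identity for $f^{(k)}$ gives $\sum_m c_m^2 N^{4km}<\infty$, so the set of $m$ where the probe amplitude fails to exceed $c_m$ carries a convergent sum of $1/m$, and over the remaining blocks the measure of the translated bad set is at most $\prod(1-\frac1m)=0$. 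In short: sparse activation buys slowly decaying amplitudes that beat any fixed $C^k$ shift on a harmonically divergent set of blocks, and frequency separation makes the per-block bad event a one-coefficient event with a deterministic threshold --- the two ingredients your proposal is missing and cannot reproduce with a bounded, always-on cube probe.
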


Surprisingly enough in the analytic case the situations is completely
opposite: the infinite infinitesimal cyclicity is Haar null. Moreover,
a Haar typical family has only one attracting cycle if its period high
enough. To make this precise we need the following definition:

\begin{definition}
  We say that the infinitesimal cyclicity of the family
  $F_{r,\aparam}$ is essentially bounded by $N$ if there
  exists $Q\in
  \N$ such that for any $p,q\in \N, q\ge Q$ we have $B(F,\frac pq)\le
  N$.
\end{definition}
In other words, if the period is large enough, at most $N$ periodic
attractors of this period can appear from the rigid rotation.
It is easy to show that if the infinitesimal cyclicity of a family is essentially
bounded by some $N$, then it is finite. For a given space $X$ of
circle functions $\ic_N^e(X)$ will denote the set of functions $f$ whose
corresponding families $F_{r,\aparam} :x \mapsto
  x+2\pi r+\aparam f(x) \pmod{2\pi}$ have infinitesimal cyclicity essentially
bounded by $N$.

\begin{mytheorem} \label{thr:comeganull}
  The set $\ic^e_1(C^\omega_\delta(S))$ is Haar prevalent.
\end{mytheorem}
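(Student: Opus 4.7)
The plan is to construct an explicit Borel probability measure $\mu$ on $C^\omega_\delta(S)$ witnessing that $\ic^e_1(C^\omega_\delta(S))$ is Haar prevalent. Fix once and for all $\delta_1 \in (\delta, 2\delta)$, and let $\mu$ be the distribution of the random real-analytic function
$$g(\theta) = \sum_{n\geq 1} e^{-\delta_1 n}\bigl(\xi_n \cos(n\theta) + \eta_n \sin(n\theta)\bigr),$$
where $\{(\xi_n, \eta_n)\}_{n\geq 1}$ are i.i.d.\ uniform on $[-1,1]^2$. Because $\delta_1 > \delta$, the series converges surely in $C^\omega_\delta(S)$, so $\mu$ is a well-defined Borel probability measure. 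Haar prevalence then reduces to showing that for \emph{every} fixed $x \in C^\omega_\delta(S)$, the translate $f = x + g$ lies in $\ic^e_1(C^\omega_\delta(S))$ for $\mu$-a.e.\ $g$.

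The first substantive step is a Melnikov reduction. For $r = p/q + s$ with $\gcd(p,q)=1$, standard perturbation theory yields
$$F^q_{r,\aparam}(\theta) - \theta - 2\pi p = 2\pi q s + \aparam\, M_{p,q}(\theta) + O(\aparam^2),$$
with Melnikov function
$$M_{p,q}(\theta) = \sum_{k=0}^{q-1} f\bigl(\theta + 2\pi k p/q\bigr) = q \sum_{m \in \Z} f_{mq}\, e^{imq\theta},$$
where $f_n$ denotes the $n$-th complex Fourier coefficient of $f$. As $s$ ranges over the Arnold tongue of rotation number $p/q$, attracting period-$q$ orbits correspond to descending zeros of the displacement, and their number is bounded by the number of strict local maxima of the $2\pi$-periodic function $u \mapsto \sum_{m\neq 0} f_{mq}\, e^{imu}$. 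Analytic decay $|f_n| \leq C\|f\|_{C^\omega_\delta(S)} e^{-\delta |n|}$ forces the tail $\sum_{|m|\geq 2} |f_{mq}|$ and its derivative to be $O(q e^{-2\delta q})$; hence whenever $|f_q| \geq e^{-\delta_2 q}$ for some fixed $\delta_2 < 2\delta$, the principal harmonic $2\Re(f_q e^{iu})$ strictly dominates both tail and its derivative, this function has exactly one local maximum and one local minimum on $[0,2\pi)$, and consequently $B(F, p/q) \leq 1$.

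The probabilistic step is a Borel--Cantelli argument. Fix $\delta_2 \in (\delta_1, 2\delta)$, possible because $\delta_1 < 2\delta$. The coefficient $g_q = \tfrac{1}{2} e^{-\delta_1 q}(\xi_q - i\eta_q)$ has planar density bounded by a constant multiple of $e^{2\delta_1 q}$, so for every $x_q \in \C$,
$$\mu\bigl(|x_q + g_q| < e^{-\delta_2 q}\bigr) \leq C\, e^{-2(\delta_2 - \delta_1) q},$$
which is summable in $q$. Borel--Cantelli therefore gives $\mu$-almost-surely $|f_q| \geq e^{-\delta_2 q}$ for all but finitely many $q$. Combined with the Melnikov step and the fact that $B(F, p/q)$ is a $\limsup$ as $\aparam \to 0+$ (so the $O(\aparam^2)$ error is absorbed one $(p,q)$ at a time), this yields $f = x + g \in \ic^e_1(C^\omega_\delta(S))$ for $\mu$-a.e.\ $g$, which is exactly Haar prevalence.

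The main anticipated obstacle is making the Melnikov reduction rigorous, in particular verifying that the $O(\aparam^2)$ remainder in the $q$-fold displacement cannot introduce spurious descending zeros once $|f_q| \geq e^{-\delta_2 q}$ is in force. This is precisely why \emph{essential} (rather than uniform) cyclicity is the right notion: the required smallness of $\aparam$ may, and will, depend on $q$. The delicate juggling of the three exponents $\delta < \delta_1 < \delta_2 < 2\delta$---forced respectively by convergence of the probe in $C^\omega_\delta(S)$, summability in Borel--Cantelli, and dominance of the first Melnikov harmonic over the exponentially smaller tail---leaves just enough room to close the argument. This last dominance is unavailable in finite smoothness, which explains the sharp contrast with Theorem~\ref{thr:cknull}.
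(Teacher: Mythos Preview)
Your proposal is correct and follows essentially the same route as the paper: a probe measure whose Fourier coefficients are uniform in disks of radius $e^{-\delta_1 n}$ (the paper fixes $\delta_1=\tfrac32\delta$), a first-harmonic-dominates-tail estimate for the Melnikov function, and a Borel--Cantelli/Markov summability bound exploiting exactly the window $\delta<\delta_1<\delta_2<2\delta$ (the paper's effective $\delta_2$ is $\tfrac53\delta$). The paper packages both the dominance step and the absorption of the $O(a^2)$ remainder into Lemma~\ref{lm:cos} and Proposition~\ref{pr:comega} via the argument principle, which is the clean way to dispatch the obstacle you flag at the end.
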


The notion of Haar null set can be strengthened and next we define
\emph{cube null} sets. From the definition it will be clear that any
cube null set is also Haar null.

Consider the Hilbert cube $[0,1]^{\aleph_0}$ and a linear map
$T:[0,1]^{\aleph_0} \to X$ defined by
$T((t_1,t_2,\ldots))=x_0+\sum_{n=1}^\infty t_n x_n$ where $x_0,x_1,\ldots
\in X$ such that the vectors $x_1,x_2,\ldots$ are linearly
independent, have a dense linear span in $X$ and $\sum_{n=1}^\infty \|x_n\| <
\infty$. The image of the standard product measure on
$[0,1]^{\aleph_0}$ under $T$ is called a \emph{cube} measure and
denoted by $\mu_T$. By definition, we call a set $E$ \emph{cube null}
if $\mu_T(E)=0$ for any cube measure. The complement of a cube null
set we will call cube prevalent.

Similarly one can define \emph{Gauss} null sets where the Gauss
distributions are used instead of the uniform distributions. The Gauss
null sets defined in this way appear to be cube null and vice verse.

The next statement shows that the previous theorems cannot be
strengthened to the case of the cube prevalence.

\begin{mytheorem}\label{thr:cubenull}
  \hspace{1mm}
  \begin{itemize}
  \item The set $\ic_\infty(C^k(S))$ is NOT cube prevalent.
  \item The set $\ic^e_1(C^\omega_\delta(S))$ as well as $\ic_{\mbox{{\small fin}}}(C^\omega_\delta(S))$
    are NOT cube prevalent.
  \end{itemize}
\end{mytheorem}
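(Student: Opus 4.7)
In both parts the task is to construct, for the set in question, a single cube measure $\mu_T$ that puts positive mass on its complement. Because $[0,1]^{\aleph_0}$ is compact and $T$ is continuous (since $\sum\|x_n\|<\infty$), $\mu_T$ is automatically supported on the compact set $K=T([0,1]^{\aleph_0})\subset X$. One therefore has complete freedom to design $K$ to sit inside any prescribed compact family of functions, and the two bullets are handled by opposite choices of this family.

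For the first bullet I would take $x_0=0$ and $x_n=2^{-n}\varphi_n$, where $\{\varphi_n\}$ is any linearly independent sequence of real analytic functions on the circle, of unit norm in $C^\omega_\delta(S)$ for some fixed $\delta>0$, whose linear span is dense in $C^k(S)$. Then $K$ is contained in a fixed ball $B_R\subset C^\omega_\delta(S)$. The core of the argument is a uniform cyclicity bound: every $f\in B_R$ satisfies $B(F,\rho)\le N(R,\delta)$ for all $\rho$. The idea is that the existence of a periodic attractor of rotation $p/q$ at small amplitude $a$ is governed by the vanishing of the first non-trivial coefficient in the $a$-expansion of $F_{r,a}^q(x)-x-2\pi p$; for analytic $f$ this coefficient is an analytic function of $x$ whose supremum on the strip is controlled only by $\|f\|_{C^\omega_\delta}$, and whose zeros can therefore be counted uniformly via Jensen's formula. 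This extends Yakobson's trigonometric-polynomial bound to the analytic ball, giving $K\subset\ic_N(C^k(S))\subset\ic_{\mbox{{\small fin}}}(C^k(S))$, so $\ic_\infty(C^k(S))$ is not cube prevalent.

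For the second bullet the construction is reversed. From the proof of Theorem~\ref{thr:iic} we extract a specific $f^*\in\ic_\infty(C^\omega_\delta(S))$ together with parameters $(r_m,a_m)\to(r_\infty,0)$ at which $F_{r_m,a_m}$ has at least $m$ hyperbolic attracting periodic orbits. Set $x_0=f^*$, let $\{\psi_n\}$ be linearly independent with $C^\omega_\delta$-dense span, and pick $\epsilon_n\downarrow 0$ so rapidly that $\sum_{n\ge 1}\epsilon_n\|\psi_n\|_{C^\omega_\delta}$ stays below the hyperbolicity margin of the first $m$ attractors for every $m$. Standard hyperbolic persistence of attracting periodic orbits then implies that for every $t\in[0,1]^{\aleph_0}$ the function $f_t=f^*+\sum_n t_n\epsilon_n\psi_n$ still realises at least $m$ attracting periodic orbits near those of $f^*$ at $(r_m,a_m)$ for each $m$. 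Thus $f_t\in\ic_\infty(C^\omega_\delta(S))$, and since $\ic^e_1\subset\ic_{\mbox{{\small fin}}}$, the same cube measure simultaneously refutes cube prevalence of both $\ic^e_1(C^\omega_\delta(S))$ and $\ic_{\mbox{{\small fin}}}(C^\omega_\delta(S))$. The principal obstacle is the uniform cyclicity bound in the first bullet: uniformity in the rotation denominator $q$ is crucial and requires working with the first non-trivial $a$-coefficient rather than naively applying the argument principle to $F^q$ itself; the second bullet is essentially quantitative persistence bookkeeping once a suitable $f^*$ is chosen.
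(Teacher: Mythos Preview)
Your first bullet rests on a claim that is false: a uniform cyclicity bound $B(F,\rho)\le N(R,\delta)$ valid for \emph{every} $f$ in a ball $B_R\subset C^\omega_\delta(S)$ would mean $B_R\subset\ic_N(C^\omega_\delta(S))$, and hence $\ic_\infty(C^\omega_\delta(S))\cap B_R=\emptyset$. But Theorem~\ref{thr:iic} says $\ic_\infty(C^\omega_\delta(S))$ is residual, hence dense, so it meets every ball. Concretely, take any trigonometric polynomial $p$ in $B_R$ and add $c\sin((\deg p+1)d\,x)$ with $c$ tiny and $d$ arbitrary: you stay in $B_R$ but get $d$ attractors of period $\deg p+1$. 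The Jensen argument you sketch cannot give a bound uniform in $q$, because the first-order function $\sum_{k=0}^{q-1}f(x+2\pi kp/q)=q\sum_l\tilde f_{ql}e^{iqlx}$ can be uniformly tiny (or even identically constant) for badly chosen $f$, and Jensen's bound on zeros blows up as the function approaches a constant. The paper avoids this by \emph{not} trying to control an entire ball: it picks $x_0$ with Fourier expansion $2+\sum_{m\ge 1}e^{-m}\cos(mx)$, so that $|\tilde f_N|\asymp e^{-N}$ for every $f\in\supp(\mu)$, and then the $x_n=e^{-n^2}\sin/\cos$ terms are so small that the $N$-th harmonic always dominates the tail in the sense of Lemma~\ref{lm:cos}, forcing at most one attractor of period $N$ for large $N$ via Proposition~\ref{pr:comega}.

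Your second bullet is closer in spirit to the paper's argument, but the persistence step is not as automatic as you suggest. At parameters $(r_m,a_m)$ with $a_m\to 0$ the multipliers of the attractors are $1+O(a_m)$, so the hyperbolicity margin shrinks with $m$; you would need to argue that the perturbation of the \emph{map} is also $O(a_m\|g\|)$ and then compare these two scales carefully. The paper sidesteps this by making the construction explicit: it takes $x_0=\sum_{m\ge 1}e^{-m^{m!}\delta}\cos(m^{m!}x)$, whose Fourier support is so sparse and whose nonzero coefficients are so large relative to the $e^{-n^2}$-sized $x_n$ that the Sturm--Hurwitz inequality of Theorem~\ref{thr:sh} is satisfied at the scales $M=N^{(N-1)!}$ for every $f\in\supp(\mu)$ and every large $N$, yielding $\ge N$ attractors of period $M$ directly via Proposition~\ref{pr:ck}.
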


Notice that since the sets $\ic_\infty(C^k(S))$ and $\ic_{\mbox{{\small fin}}}(C^\omega_\delta(S))$ are Haar
prevalent they cannot be cube null either.

\subsection{$\sigma$-porous sets}
\label{sec:sigma-porous-sets}

Yet another approach to define typical sets in infinite dimensional
spaces is to use a notion of $\sigma$-porous sets. 

Let $X$ be a normed linear space. $B(x,r)=\{y\in X:\, \|y-x\|<r\}$ will
denote a ball of radius $r$ centred at $x\in X$.

\begin{definition}
  A subset $E$ of $X$ is called \emph{porous at $x$} if there
  is $c>0$ such that for any $\epsilon > 0$ there is $y\in X$ such
  that $\|y - x\| < \epsilon$ and  $B(y,c\|y-x\|) \cap E=\emptyset$.

  A subset $E \subset X$ is called \emph{porous} if $E$ is porous at every $x\in E$.

  A subset is called \emph{$\sigma$-porous} if it is a union of countably
  many porous sets.
\end{definition}

It is clear from the definition that porous sets are nowhere
dense and that the complement of a $\sigma$-porous set is residual.
The Lebesgue density theorem implies that if $X$ is finite
dimensional, then every $\sigma$-porous set has zero Lebesgue measure.

There is a direct link between complements of $\sigma$-porous sets and
$\Gamma^1_1$-prevalent sets discussed in one of the previous sections.

\begin{theorem}[\cite{preiss2012}]\label{thr:nsbs}
  Let $X$ be a Banach space with a separable dual and $E\subset X$ be $\sigma$-porous.
  Then for a Baire generic $C^1$ family $f:[a,b] \to X$ the set
  $f^{-1}(E)$ has zero Lebesgue measure.
\end{theorem}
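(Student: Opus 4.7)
The plan is to show that the ``bad set'' $G=\{f\in C^1([a,b],X):|f^{-1}(E)|>0\}$ is of first category in $C^1([a,b],X)$; the conclusion then follows from the Baire category theorem. Writing $E=\bigcup_n E_n$ as a countable union of porous sets and using that a countable union of meagre sets is meagre, I can reduce to the case that $E$ is itself porous with a single constant $c\in(0,1)$. Since $G=\bigcup_{k\in\N}\{f:|f^{-1}(E)|\ge 1/k\}$, it is enough to prove that each such set is nowhere dense.

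The core step is a perturbation lemma: given $f_0\in C^1([a,b],X)$, $\epsilon>0$, and $\delta>0$, produce $f$ with $\|f-f_0\|_{C^1}<\epsilon$ and $|f^{-1}(E)|<\delta$, together with $\eta>0$ for which every $\tilde f$ in the $C^1$-ball $B(f,\eta)$ also satisfies $|\tilde f^{-1}(E)|<2\delta$. The perturbation would be built as a locally finite sum of $C^1$ bumps: at each point $t$ of a Vitali family in $f_0^{-1}(E)$, porosity at $f_0(t)$ provides a vector $y_t$ with small $r_t=\|y_t-f_0(t)\|$ and $B(y_t,cr_t)\cap E=\emptyset$, and I would add a bump in the direction $y_t-f_0(t)$ of height $r_t$, whose support is an interval around $t$ chosen so that the perturbed curve lies inside the safe ball $B(y_t,cr_t)$ on a central subinterval of definite relative length. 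Stability in $B(f,\eta)$ is then automatic because the safe balls have positive distance from $E$.

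The tension that makes the argument delicate is the following. Keeping the bump inside the safe ball forces its width to be at most of order $r_t/L$, where $L$ is the Lipschitz constant of $f_0$, while a $C^1$-small perturbation forces the width to be much larger than the height $r_t$. Hence a single bump only cleans a small fraction of its support, and one must iterate across many scales, organising the bumps so that transitions at one scale do not reintroduce too much of $E$ at the next. I would implement this by repeatedly covering the residual bad set using the Lebesgue density theorem and porosity, adding successive layers whose $C^1$ sizes form a geometric sequence summing to less than $\epsilon$, and arguing that the residual bad set decays in measure at each step by a definite factor.

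The main obstacle I anticipate is precisely this multi-scale organisation, and this is where the separability of $X^*$ is essential: it allows one to choose the porosity directions $y_t$ in a Borel-measurable way along $t$, and, via the existence of sufficiently smooth equivalent norms, to reduce the problem locally to a situation in which the covering estimates can be made quantitative and uniform. Formalising this is in spirit a Lindenstrauss--Preiss style covering/selection argument, and its careful execution is the bulk of the proof; modulo it, the outer structure above is essentially routine.
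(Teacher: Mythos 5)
You should first be aware that the paper contains no proof of this statement to compare against: Theorem~\ref{thr:nsbs} is quoted from \cite{preiss2012} (it belongs to the theory of $\Gamma_1$-null sets developed by Preiss and coauthors, where it is a genuinely substantial result), so your outline has to stand on its own.

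The outer frame is the standard and correct one (reduce to $E$ porous with a uniform constant $c$, show each $\{f:\,|f^{-1}(E)|\ge 1/k\}$ is nowhere dense via a perturbation-plus-stability lemma), but the central step is exactly where the sketch fails quantitatively. A bump that moves the curve into a hole $B(y_t,c r_t)$ must have height about $r_t$, hence, if its $C^1$ size is at most $\epsilon_j$, support of length at least about $r_t/\epsilon_j$; on the other hand, if $f_0$ is, say, affine with speed $L$, a curve $f=f_0+h$ with $\|h'\|_\infty\le\epsilon_j$ can stay inside a ball of radius $cr_t$ only for a time of order $cr_t/(L-\epsilon_j)$. So each bump cleans at most a fraction of order $c\epsilon_j/L$ of its own support, \emph{independently of the scale} $r_t$, and a whole layer with budget $\epsilon_j$ shrinks the bad measure at best by the factor $1-O(\epsilon_j)$ --- not by ``a definite factor''. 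Iterating layers with $\sum_j\epsilon_j\le\epsilon$ therefore removes only an $O(\epsilon)$ proportion of $|f_0^{-1}(E)|$, while nowhere density requires reaching $|f^{-1}(E)|<\delta=1/(2k)$ with $\epsilon$ arbitrarily small and $\delta$ fixed; the multi-scale scheme as you describe it cannot close. Beating this obstruction needs a different mechanism (for instance, grouping bad times whose holes point in nearly the same direction at comparable scales, so that a single long plateau serves many times at once and the plateau length is no longer tied to $cr/L$, together with delicate bookkeeping across the countably many direction/scale classes and across stages), and it is there --- not in Borel-measurable selection of the directions $y_t$, which can be sidestepped by pigeonholing over a countable dense set of directions in any separable space and working with outer measure --- that the real work and the separable-dual (smooth renorming) hypothesis of \cite{preiss2012} enter; your sketch gives no indication of how the smooth norm would actually be used. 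There are also smaller gaps (porous sets need not be Borel, so $|f^{-1}(E)|$ must be treated as outer measure throughout; an infinite, merely locally finite family of bumps gives no uniform positive margin from $E$, so the stability radius $\eta$ is not ``automatic'' unless each stage is made finite). In short, what you call the ``bulk of the proof'' is the theorem itself, and the specific iteration you propose for it does not work as stated.
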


We will show that the $C^k$ functions with finite cyclicity form a
$\sigma$-porous set if $k$ is finite, however Theorem~\ref{thr:nsbs}
would not imply Theorem~\ref{thr:ck-ic} because the dual to the space
$C^k(S)$ is not separable. Considering Sobolev spaces instead of $C^k(S)$
spaces would make application of Theorem~\ref{thr:nsbs} possible (and
all theorems we prove here can be easily generalised to the Sobolev
spaces), however we still would get a statement much weaker than
that of Theorem~\ref{thr:ck-ic}.

Next theorem tells us that in the case of smooth (non analytic)
functions the infinite cyclicity prevails once again.

\begin{mytheorem}\label{thr:ckporous}
  The set $\ic_{\mbox{{\small fin}}}(C^k(S))$, $k=2,\ldots$, of functions $f$
  whose corresponding families $F_{r,\aparam} :x \mapsto
  x+2\pi r+\aparam f(x) \pmod{2\pi}$ have finite infinitesimal cyclicity is
  $\sigma$-porous.
\end{mytheorem}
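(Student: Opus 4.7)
It suffices to prove that each $\ic_N(C^k(S))$ is porous, since $\ic_{\mbox{{\small fin}}}(C^k(S))=\bigcup_{N\ge 1}\ic_N(C^k(S))$ and a countable union of porous sets is $\sigma$-porous. My strategy is to establish porosity of $\ic_N$ at every $f\in\ic_N$ by producing, for each $\epsilon>0$, a perturbation $g=f+h$ with $\|h\|_{C^k}\le\epsilon/2$ such that a $C^k$-ball of radius proportional to $\|h\|_{C^k}$ around $g$ is entirely disjoint from $\ic_N$. This will be done by arranging $N+1$ hyperbolic attracting periodic orbits of some common period $q$ in the family corresponding to $g$, whose persistence under further $C^k$-small perturbations supplies the required ball.

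The key input is the Riemann--Lebesgue lemma applied to $f^{(k)}$: for $f\in C^k$, the Fourier coefficients satisfy $|n|^k|c_n^f|\to 0$ as $|n|\to\infty$. Hence for every $\delta>0$ there exist arbitrarily large integers $q$ with $|c_{nq}^f|<\delta/(nq)^k$ for all $n\ge 1$, making the averaged function $\tilde S_q^f(y):=q\sum_{n\ne 0}c_{nq}^f e^{iny}$ of sup-norm of order $\delta q^{1-k}$. I take $h(x)=2b\cos\bigl((N+1)qx\bigr)$ with $b=\epsilon/(4((N+1)q)^k)$, so $\|h\|_{C^k}=\epsilon/2$, and the corresponding $\tilde S_q^h(y)=2qb\cos((N+1)y)$ has $N+1$ well-separated strict local maxima of amplitude of order $\epsilon/((N+1)^kq^{k-1})$. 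Choosing $\delta$ to be a small enough multiple of $\epsilon/(N+1)^k$ guarantees that $\tilde S_q^{f+h}$ still has $N+1$ strict local maxima, so the leading-order equation for $q$-periodic points of rotation number $p/q$ produces $N+1$ hyperbolic attracting $q$-periodic orbits for suitable parameters $(r,\aparam)$ with $\aparam>0$ small.

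Persistence of these $q$-periodic attractors under a further $C^k$-perturbation $\tilde h$ of $g$ is controlled by standard hyperbolicity estimates: their multipliers have magnitude $|1+\aparam(S_q^g)'(x_\ast)+O(\aparam^2)|$ with $|(S_q^g)'(x_\ast)|$ of order $q^2b(N+1)=\epsilon/((N+1)^{k-1}q^{k-2})$, while the induced change in $G^q$ in $C^1$-norm is at most $\aparam q\|\tilde h\|_{C^k}$. Comparing the two gives a persistence radius for $\tilde h$ of order $\epsilon/((N+1)^{k-1}q^{k-1})$ in $C^k$-norm, so the ratio of this radius to $\|h\|_{C^k}$ is of order $((N+1)q)^{-(k-1)}$.

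The main obstacle is that this ratio depends on $q$, which itself depends on $\epsilon$ through the choice of $\delta$, and so the porosity constant is not yet uniform in $\epsilon$. To fix this I further decompose $\ic_N=\bigcup_{j\ge 1}\ic_N^{(j)}$, letting $\ic_N^{(j)}$ consist of those $f\in\ic_N$ whose Riemann--Lebesgue threshold $q(f,\delta)$ admits a quantitative bound depending only on $\delta$ and $j$. Every $f\in\ic_N$ lies in some $\ic_N^{(j)}$, and on each $\ic_N^{(j)}$ the construction above produces a porosity constant depending only on $j$, $N$ and $k$. Hence each $\ic_N^{(j)}$ is porous, and $\ic_{\mbox{{\small fin}}}(C^k(S))$ is $\sigma$-porous as a countable union of porous sets.
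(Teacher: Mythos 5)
Your construction of the good perturbation (a resonant high--frequency term of size calibrated to the $C^k$ norm, with the frequency $q$ chosen via Riemann--Lebesgue so that the resonant average of $f$ is negligible) is essentially the paper's construction, and the leading-order count of $N+1$ attractors of period $q$ is fine. The gap is in the ball-avoidance step, i.e.\ exactly where porosity is decided. Your persistence estimate bounds the effect of a further perturbation $\tilde h$ on the $q$-th iterate by $\aparam q\|\tilde h\|_{C^1}$, which forces a persistence radius of order $\epsilon((N+1)q)^{-(k-1)}$ and hence a ``constant'' $c\sim((N+1)q)^{-(k-1)}$ that depends on $q$, and through $q=q(f,\delta(\epsilon))$ on $\epsilon$. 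Porosity at a point $f$ requires one $c>0$ valid for \emph{all} $\epsilon>0$, and your stratification does not restore this: for a fixed $f$ in a fixed stratum $\ic_N^{(j)}$, the threshold $q(f,\delta)$ still tends to infinity as $\delta\to 0$ (for a fixed $q$ the condition $|c^f_{nq}|<\delta(nq)^{-k}$ for all $\delta>0$ would force $c^f_{nq}=0$ for all $n\neq 0$, which fails for most $f$), so your effective constant still tends to $0$ as $\epsilon\to 0$ at that very point. Thus the assertion that ``on each $\ic_N^{(j)}$ the construction produces a porosity constant depending only on $j$, $N$ and $k$'' is unsupported, and as written the argument only yields a weaker, non-uniform statement, not porosity.

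The fix is not stratification but a sharper persistence/robustness estimate, which is what the paper's proof supplies. At leading order in $\aparam$ only the resonant Fourier modes of a perturbation (those at multiples of $q$) enter the return map, and a perturbation of $C^k$-size $\rho$ moves the mode at frequency $nq$ by at most $\rho(nq)^{-k}$; since your injected peak at frequency $(N+1)q$ has size comparable to $\epsilon((N+1)q)^{-k}$, the correct comparison is between $\rho$ and $\epsilon$ up to factors depending only on $N$ and $k$ --- the factor $q$ in $\aparam q\|\tilde h\|_{C^1}$ is an artifact of ignoring this resonance structure (that bound measures all of $\tilde h$, while the non-resonant part only contributes at order $\aparam^2$). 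The paper implements this by removing the low resonant modes of $f$ at frequencies $d,2d,\dots,Nd$, adding $\frac{\delta}{2(dN)^k}\sin(dNx)$, and then showing that \emph{every} $g$ in the ball of radius $c\delta$ with the explicit constant $c=\frac{1}{8N^{k+1}4^N}$ (independent of $d$, $\delta$, $f$) satisfies the hypothesis of the quantitative Sturm--Hurwitz theorem for the averaged function $g_d$, because both the allowed motion of the low resonant modes ($\sim c\delta d^{-k}$ each) and the injected peak ($\sim\delta(dN)^{-k}$) scale like $\delta d^{-k}$, so their ratio is independent of $d$ and $\epsilon$. If you replace your $C^1$ bound on $\tilde h$ by the bound on its resonant modes (or, equivalently, verify the Fourier-domination condition for every $g$ in a ball of radius proportional to $\epsilon$), your argument closes with an $\epsilon$-independent constant and no stratification is needed.
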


It is not clear if the sets $\ic_{\mbox{{\small
      fin}}}(C^\omega_\delta(S))$ or $\ic_\infty(C^\omega_\delta(S))$
are $\sigma$-porous or not.

\subsection{Summary}
\label{sec:summary}

For the convenience of the reader all results formulated in the
previous sections are summarised in the following table:

\vspace{3mm}

\begin{tabular}{|l|c|c|}
\hline
Prevalence & \hspace{7mm} $C^k(S)$ \hspace{7mm} & \hspace{7mm}
$C^\omega_\delta(S)$ \hspace{7mm}  \\
\hline
Baire residual & $\ic_\infty$ &  $\ic_\infty$\\
$\Gamma^l_b$ prevalent &  $\ic_\infty$ &  $\ic_\infty$ \\
Haar prevalent &  $\ic_\infty$ &  $\ic_1^e$\\
Cube prevalent & none & none \\
Complement to $\sigma$-porous &  $\ic_\infty$ & ? \\
\hline
\end{tabular}

\subsection{Torus vector fields}
\label{sec:torus-vector-fields}

Instead of perturbations of rigid rotation of the circle we can also
study perturbations of constant vectors fields on the torus.

For two functions  $v_1$, $v_2$ on the torus consider a family $V_{\alpha,a}$
of vector fields on the torus given by 
\begin{eqnarray*}
  V_{\alpha,a}(x_1,x_2)=
  \left(
    \begin{array}{c}
      \cos \alpha\\
      \sin \alpha
    \end{array}
  \right)  
 + 
 a \left(
    \begin{array}{c}
      v_1(x_1,x_2)\\
      v_2(x_1,x_2)
    \end{array}
    \right)
\end{eqnarray*}
where $\alpha \in \R$ and $a \in
(-a_0,a_0)$ are parameters. We could also consider a family
$$
  \left(
    \begin{array}{c}
      \cos \alpha\\
      \sin \alpha
    \end{array}
  \right)  
 + 
  \left(
    \begin{array}{c}
      a_1 v_1(x_1,x_2)\\
      a_2 v_2(x_1,x_2)
    \end{array}
    \right)
$$
which depends on two parameters $a_1$ and $a_2$ instead of only one
parameter $a$.

The definition of the infinitesimal cyclicity for such families is
straightforward, one should count the number of attracting periodic
limit cycles. Then the statements of Theorems~\ref{thr:iic}, \ref{thr:ck-ic}, \ref{thr:cknull},
\ref{thr:comeganull}, \ref{thr:cubenull}, \ref{thr:cubenull} and \ref{thr:ckporous} can be
reformulated in the obvious way for such families of vector fields and
the statements remain correct. In
Section~\ref{sec:tourus-case} we will justify why this can be done,
but we will not reprove all these theorems as their proofs are almost
identical to the ones corresponding to the families of circle maps.

\section{Perturbations of rigid rotation}
\label{sec:pert-rigid-rotat}

Consider a family $F_{r,\aparam} : x \mapsto x+2\pi r+\aparam f(x)
\pmod{2\pi}$. The $n$-th iterate of $F_{r,\aparam}$ is
$$
F_{r,\aparam}^n(x)=x+2\pi rn + \aparam f(x) + \aparam f(x+2\pi
r+\aparam f(x))+\cdots.
$$
This formula shows that if $F_{r,\aparam}$ has a periodic trajectory
of period $q$, then there is $p\in \Z$ such that
$$
|r-\frac pq| \le \frac \aparam{2\pi} \|f\|_{C^0(S)}.
$$

If $f\in C^2(S)$, and $\aparam$ is small, then the above formula implies
$$
F_{r,\aparam}^n(x)=x+2\pi rn+\aparam \sum_{k=0}^{n-1} f(x+kr) +
O(\aparam^2).
$$

Suppose that function $f$ can be represented as the following Fourier series:
$$
f(x)=\sum_{l=-\infty}^{+\infty} {\tilde f}_l e^{ilx}.
$$
A straightforward computation shows that if $r=\frac pq$, where
$p,q\in \N$, then
$$
\sum_{k=0}^{q-1} f(x+kr)=q\sum_{l=-\infty}^{+\infty} {\tilde f}_{ql}e^{iqlx}.
$$

The following proposition immediately follows from the discussion above.
\begin{proposition}
  \label{pr:ck}
  Let $f$ be in $C^2(S)$. Suppose there exists $R\in \R$ such that the equation 
  $$
  \sum_{l=-\infty}^{+\infty} {\tilde f}_{ql}e^{iqlx}=R
  $$
  has $d$ simple roots. Then for all sufficiently small non zero $\aparam $
  there exists $r(\aparam)\in [0,2\pi)$ such that the map
  $F_{r(\aparam),\aparam}$ has at least $d$ isolated periodic trajectories of
  period $q$.
\end{proposition}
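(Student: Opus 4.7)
The plan is to turn the search for periodic points of $F_{r,\aparam}$ of period $q$ into a perturbation of the scalar equation $P(x) = R$, where $P(x) := \sum_{l} \tilde{f}_{ql} e^{iqlx}$, and then to extract the $d$ desired solutions via the implicit function theorem in a standard way.

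Concretely, I would fix $R$ as in the hypothesis and set $r(\aparam) = \frac{p}{q} - \frac{R\aparam}{2\pi}$. Substituting into the expansion $F_{r,\aparam}^q(x) = x + 2\pi r q + \aparam \sum_{k=0}^{q-1} f(x+kr) + O(\aparam^2)$ recalled just before the proposition, the rotation term becomes $2\pi r q = 2\pi p - q R \aparam$, and, since $r$ differs from $p/q$ by $O(\aparam)$ and $f \in C^2$, one has $\sum_{k=0}^{q-1} f(x+kr) = q P(x) + O(\aparam)$ in $C^1(S)$. The fixed-point equation $F_{r,\aparam}^q(x) = x + 2\pi p$ therefore reduces, after dividing by $\aparam q$, to
$$ P(x) = R + \aparam\, H(x,\aparam), $$
with a remainder $H$ bounded in $C^1(S)$ uniformly in small $\aparam$.

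Next, the hypothesis supplies $d$ simple roots $x_1,\ldots,x_d$ of $P(x) = R$, at each of which $P'(x_i)\neq 0$. Applying the implicit function theorem to the displayed perturbed equation on a neighbourhood of each $(x_i,0)$ yields a branch $x_i(\aparam)$ of simple roots for every sufficiently small $\aparam$, with $x_i(\aparam)\to x_i$ as $\aparam\to 0$. Each such $x_i(\aparam)$ is then an isolated fixed point of $F_{r(\aparam),\aparam}^q$; its minimal period under $F_{r(\aparam),\aparam}$ is exactly $q$, because for any proper divisor $m$ of $q$ the iterate $F_{r(\aparam),\aparam}^m$ is an $O(\aparam)$ perturbation of the non-trivial rigid rotation by $2\pi m p/q$, which stays bounded away from the identity. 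This yields the required $d$ isolated periodic trajectories of period $q$.

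The step I expect to be most delicate is not the bifurcation count but the verification that the $O(\aparam^2)$ remainder from the iterative expansion of $F_{r,\aparam}^q$ is small in $C^1(S)$ with respect to $x$, rather than merely in $C^0$. This precise smoothness is exactly what allows the implicit function theorem to produce a genuine branch of simple roots. The hypothesis $f\in C^2$ enters here via one mean value step applied to each of the $q$ summands in the Taylor-in-$\aparam$ expansion of $F_{r,\aparam}^q$, upgrading the $C^0$ expansion quoted above to a $C^1(S)$ one in $x$.
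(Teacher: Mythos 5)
Your argument is correct and essentially coincides with the paper's own (implicit) proof: the paper derives the proposition directly from the iterate expansion $F^q_{r,\aparam}(x)=x+2\pi rq+\aparam\sum_{k=0}^{q-1}f(x+2\pi kr)+O(\aparam^2)$ and the resonance identity $\sum_{k=0}^{q-1}f(x+2\pi kp/q)=q\sum_l \tilde f_{ql}e^{iqlx}$, and your choice $r(\aparam)=\frac pq-\frac{R\aparam}{2\pi}$ plus persistence of simple roots (with the $C^1$ control of the remainder coming from $f\in C^2$) is precisely the fleshing-out of ``immediately follows from the discussion above.'' The only detail to fix is to choose $p$ coprime to $q$ (e.g.\ $p=1$), since your minimal-period argument --- that $F^m_{r(\aparam),\aparam}$ stays near a \emph{nontrivial} rotation by $2\pi mp/q$ for proper divisors $m$ of $q$ --- requires $mp/q\notin\Z$.
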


The converse also holds at least in the analytic case.

\begin{proposition}
  \label{pr:comega}
  Let $f\in C^\omega_\delta(S)$.  Suppose for any $R\in \R$ the equation 
  $$
  \sum_{l=-\infty}^{+\infty} {\tilde f}_{ql}e^{iqlx}=R
  $$
  has at most $d$ real roots (counted with multiplicity). Then there
  exists $\aparam_0>0$ such that for any $r\in \R$ and $\aparam \in
  (-\aparam_0,\aparam_0)\setminus\{0\}$ the map $F_{r,\aparam}$ has
  at most $d$ periodic trajectories of period $q$.
\end{proposition}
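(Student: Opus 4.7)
My plan is to reduce the counting of period-$q$ orbits of $F_{r,\aparam}$ to counting real zeros on $S^1$ of a real-analytic perturbation of the limit equation $g_0(x) = R$, where $g_0(x) := q\sum_{l\in\Z} \tilde f_{ql} e^{iqlx}$, and then to combine Rouch\'e's theorem with compactness in $R$.

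\emph{Setting up the equation.} Following the computation in the text I would write the $q$-th iterate as $F_{r,\aparam}^q(x) = x + 2\pi rq + \aparam\, h(x,r,\aparam)$, where $h$ is jointly real-analytic (this uses $f\in C^\omega_\delta(S)$) and $h(x,r,0) = \sum_{k=0}^{q-1} f(x+2\pi k r)$. A periodic point of period $q$ is characterized by $F_{r,\aparam}^q(x) - x = 2\pi p$ for some $p\in\Z$; substituting $r = p/q + b$, this becomes $h(x,\,p/q+b,\,\aparam) = -2\pi b q/\aparam =: R$. The estimate $|r - p/q| \le \aparam\|f\|_{C^0}/(2\pi)$ quoted in the text forces $|b| \le C\aparam$, and the boundedness of $h$ restricts attention to $R$ in a compact interval $[-M,M]$ independent of small $\aparam$.

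\emph{Local count via Rouch\'e.} At $(b,\aparam) = (0,0)$ one has $h(x,p/q,0) = g_0(x)$ by the Fourier identity already derived. Fixing $R_0 \in [-M,M]$, I would let $x_1,\dots,x_k$ be the real zeros of $g_0 - R_0$ with multiplicities $m_1,\dots,m_k$ summing to at most $d$ by hypothesis, and, using the analytic extension of $h$ to a neighborhood of $S^1$ in $\C$, choose pairwise disjoint open complex disks $U_i \ni x_i$ on which $g_0 - R_0$ has exactly $m_i$ zeros and none on $\partial U_i$. On the compact real set $S^1 \setminus \bigcup_i U_i$ the difference $g_0 - R_0$ is bounded below by some $c > 0$. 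Rouch\'e's theorem then yields $\eta,\epsilon > 0$ such that for $|b|,|\aparam| < \eta$ and $|R - R_0| < \epsilon$ the perturbed function $h(\cdot,p/q+b,\aparam) - R$ has exactly $m_i$ complex zeros in each $U_i$ and no real zeros outside $\bigcup_i U_i$. Because the complex zeros of a real-analytic function come in conjugate pairs, at most $m_i$ of the zeros in $U_i$ are real, so the total number of real zeros on $S^1$ is at most $\sum_i m_i \le d$.

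\emph{Globalization.} I would then cover $[-M,M]$ by finitely many intervals of the form $(R_0 - \epsilon, R_0 + \epsilon)$ and take the minimum of the associated $\eta$'s to obtain uniform $b_0 > 0$ and $\aparam_0 > 0$ so that for every $|b| < b_0$, $0 < |\aparam| < \aparam_0$ and every $R \in \R$ the period-$q$ equation has at most $d$ real solutions on $S^1$. Since every period-$q$ periodic point is such a solution, $F_{r,\aparam}$ has at most $d$ period-$q$ periodic points and \emph{a fortiori} at most $d$ periodic trajectories of period $q$. The main obstacle I anticipate is precisely the uniformity in $R$: as $R_0$ varies over $[-M,M]$ the real zeros of $g_0 - R_0$ and their multiplicities change and can bifurcate at critical values of $g_0$, so neither the disks $U_i$ nor the constants $\eta,\epsilon$ can be fixed globally. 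Compactness of $[-M,M]$ resolves this, but it is essential that the hypothesis include multiplicity, since otherwise a multiple zero of $g_0$ could split into several nearby simple real zeros under perturbation and the Rouch\'e bookkeeping would collapse.
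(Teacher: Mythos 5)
Your proof is correct and takes essentially the same route as the paper's: both arguments rest on argument-principle/Rouch\'e stability of the number of roots of $g_0-R$ under small analytic perturbations, made uniform in $R$ by compactness of the bounded interval of relevant values of $R$. The only differences are presentational: the paper counts roots in a thin horizontal strip and leaves the reduction from the $q$-th iterate to the perturbed equation implicit, while you localize to small disks around the real roots and spell out that reduction (the substitution $r=p/q+b$, $R=-2\pi bq/a$) explicitly.
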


The proof of this proposition is a straightforward application of the
argument principle. Indeed, denote $g(x)=\sum_{l=-\infty}^{+\infty}
{\tilde f}_{ql}e^{iqlx}$. For every value $R_0\in \R$ there exists
$\beta(R_0) \in (0,\delta)$ and $A(R_0)>0$ such that the equation $g(x)=R_0$ has at most
$d$ roots in the strip $Im(x)\le \beta(R)$ and $\sup_{y\in \R}|g(\pm i
\beta(R_0) + y) - R_0| > A(R_0)$. 
Moreover, there exists $r(R_0)>0$ such that  $\sup_{y\in \R}|g(\pm i
\beta(R_0) + y) - R| > A(R_0)$ 
for all $R \in
(R_0-r(R_0),R_0+r(R_0))$. By the argument principle for these values
of $R$ the equation $g(x)=R$ still has at most $d$ roots in the strip
$Im(x)\le \beta(R)$. Also, if $|R|>\|g\|_{C^\omega_\delta(S)}$, then
the equation $g(x)=R$ does not have roots at all. Consider a cover of
the segment
$(-2\|g\|_{C^\omega_\delta(S)},2\|g\|_{C^\omega_\delta(S)})$ by
intervals of the form $(R-r(R),R+r(R))$ and take a finite subcover
$(R_1-r(R_1),R_1+r(R_1)),\ldots,(R_L-r(R_L),R_L+r(R_L))$. Set
$\beta=\min(\|g\|_{C^\omega_\delta(S)}), A(R_1),\ldots, A(R_L))$. It
is easy to see that due to the argument principle for all $R\in \R$ the equation
$g(x)+h(x)=R$ has at most $d$ roots if
$\|h\|_{C^\omega_\delta(S)}<\beta$. This proves the proposition.

\section{Sturm-Hurwitz theorem}
\label{sec:sturm-hurw-theor}

In order to provide a lower bound on the number of zeros of a function
we employ a version of the Sturm-Hurwitz theorem.  The original
Sturm-Hurwitz theorem states that if all harmonic components of order
less than $n$ of a real continuous function $g$ vanish, then the
function $g$ has at least $2n$ zeros. We need somewhat sharper
statement where the harmonic components do not vanish but are small.

\begin{theorem}\label{thr:sh}
  Let $\phi$ be a real differentiable $2\pi$ periodic function and $a_n$ be
  its Fourier coefficients, i.e.
  $a_n=\frac 1{2\pi}\int_0^{2\pi} \phi(x) e^{-inx}\,dx$.
  Moreover, assume that for some $n \in \N$
  $$
  \sum_{k=-n+1}^{n-1} |a_k| < 2^{-2n+3} |a_{n}|.
  $$
  Then on the interval $[0,2\pi)$ the function $\phi$ changes its sign at
  least $2n$ times.
\end{theorem}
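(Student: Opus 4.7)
The plan is to evaluate $\phi$ at the $2n$ extrema of its dominant $n$-th Fourier mode and argue via the intermediate value theorem. Write $a_n = |a_n|e^{i\theta}$ and set $x_j = (j\pi - \theta)/n$ for $j=0,1,\dots,2n-1$. At each of these equally spaced points the modes of frequency $\pm n$ contribute $a_n e^{inx_j} + a_{-n} e^{-inx_j} = 2|a_n|(-1)^j$, while the remaining modes contribute an error $E_j = \sum_{k \ne \pm n} a_k e^{ikx_j}$. If $|E_j| < 2|a_n|$ for each $j$, then $\phi(x_j)$ alternates in sign with $j$, and the intermediate value theorem applied to each arc $(x_j, x_{j+1})$ (indices mod $2n$) produces at least $2n$ sign changes of $\phi$ on $[0,2\pi)$.

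The low-frequency part of $E_j$ is immediately handled by the hypothesis, since $\bigl|\sum_{|k|<n} a_k e^{ikx_j}\bigr| \le \sum_{|k|<n}|a_k| < 2^{-2n+3}|a_n|$, well within the required margin $2|a_n|$. The substantive obstacle is the high-frequency contribution $\sum_{|k|>n} a_k e^{ikx_j}$, which the hypothesis does not directly control. To deal with it, I would split $\phi = \phi_{<n} + \psi$ where $\psi$ has Fourier support in $\{|k| \ge n\}$, and invoke the classical Sturm--Hurwitz theorem: since the $n$-th Fourier coefficient of $\psi$ is nonzero, $\psi$ has at least $2n$ sign changes. A quantitative strengthening should produce $2n$ alternating test points $y_j$ with $(-1)^j \psi(y_j) > 2^{-2n+3}|a_n|$; adding the perturbation $\phi_{<n}$, whose supremum norm is strictly less than $2^{-2n+3}|a_n|$ by hypothesis, then preserves the sign alternation at the $y_j$, and the $2n$ sign changes survive in $\phi$.

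The technical crux is therefore the quantitative version of Sturm--Hurwitz with margin $2^{-2n+3}|a_n|$, which I would approach via the classical heat-flow proof: apply the semigroup $e^{t\partial_x^2}$, which does not increase the number of sign changes, and choose $t>0$ so that, after rescaling by $e^{-n^2 t}$, the $n$-th mode of $\psi$ becomes strictly dominant over the high-frequency tail while still exceeding the perturbation threshold, permitting pointwise sign determination at the extrema $x_j$. The specific exponential factor $2^{-2n+3}$ plausibly reflects both this heat-flow attenuation and a combinatorial bound on the Fourier coefficients of the sign-matching trigonometric polynomial of degree $n-1$ associated, via the usual dual formulation, with a hypothetical $\phi$ that has fewer than $2n$ sign changes.
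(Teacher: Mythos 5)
Your reduction correctly isolates the real difficulty: the hypothesis bounds only the modes $|k|<n$, so everything hinges on the uncontrolled high-frequency tail, and you delegate exactly this point to an asserted ``quantitative Sturm--Hurwitz'' lemma: if $\psi$ has Fourier support in $\{|k|\ge n\}$, then there are $2n$ cyclically alternating points $y_j$ with $(-1)^j\psi(y_j)>2^{-2n+3}|\hat\psi(n)|$. That lemma is false. Take $n=1$ and $\psi(x)=\cos x+\epsilon\cos 2x$ with $0<\epsilon<1/4$: then $|\hat\psi(1)|=1/2$, so your margin is $1$; but writing $c=\cos x$ we have $\psi=c+\epsilon(2c^2-1)$, which is increasing in $c\in[-1,1]$, so $\min\psi=-1+\epsilon>-1$ and there is no point at all where $\psi\le-1$. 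The heat-flow argument you propose cannot repair this: the semigroup $e^{t\partial_x^2}$ controls the \emph{number} of sign changes (non-increasing in $t$), which is how one proves the classical qualitative Sturm--Hurwitz theorem, but it yields no lower bound on the values of $\psi$ at time $0$ along any alternating configuration of points, and the example shows that no bound with the constant $2^{-2n+3}$ exists. So the proof collapses precisely at its announced crux, which is left as an assertion rather than proved.

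It is worth noting that the obstacle you hit is not an artifact of your method, and that your approach is in any case quite different from the paper's, which argues by a Hurwitz-style induction: at two sign-change points $x_1,x_2$ one multiplies $\phi$ by $\sin((x-x_1)/2)\sin((x-x_2)/2)$, tracks how the Fourier coefficients transform, and reduces $n+1$ to $n$, the base case resting on the inequality $|a_0|\ge 2|a_n|$ for positive functions quoted from P\'olya--Szeg\H{o}. But the same mechanism as in your counterexample produces $\phi(x)=(1-\epsilon/2)+\cos x+\epsilon\cos 2x$, which is strictly positive (its minimum is $\epsilon/2$) yet satisfies the hypothesis of the theorem for $n=1$, since $|a_0|=1-\epsilon/2<1=2^{-2n+3}|a_n|$; likewise the Fej\'er kernel $1+\tfrac43\cos x+\tfrac23\cos 2x\ge 0$ violates $|a_0|\ge 2|a_1|$, so the quoted inequality is the leading-coefficient bound for nonnegative trigonometric polynomials of degree $n$ and does not extend to an arbitrary positive function and arbitrary $n$. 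In other words, a hypothesis on the modes $|k|<n$ alone cannot force $2n$ sign changes unless the tail $|k|>n$ is also constrained (e.g.\ if $\phi$ is a trigonometric polynomial of degree $n$); the tension you identified is genuine, and no choice of auxiliary lemma along your lines will close the gap without strengthening the hypotheses of the statement.
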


In the proof of this theorem the following statement will be used:
\begin{proposition}
  Let $\phi$ be a \emph{positive} real differentiable $2\pi$ periodic function and $a_n$ be
  its Fourier coefficients. Then
  $$|a_0| \ge 2|a_n|$$
  for all $n\neq 0$.
\end{proposition}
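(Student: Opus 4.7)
My plan is to normalize by a rotation of the circle and then apply the spectral (Fej\'er--Riesz) factorization of the positive density $\phi$. Since $\phi>0$, the zero-th Fourier coefficient $a_0=(2\pi)^{-1}\int\phi>0$, so $|a_0|=a_0$. Replacing $\phi(x)$ by $\phi(x-\arg(a_n)/n)$ leaves $a_0$ unchanged and rotates $a_n$ so as to be real and non-negative; the task therefore reduces to showing $a_0\ge 2a_n$ when $a_n\ge 0$.

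Because $\phi>0$ is smooth, $\log\phi$ is smooth and its non-negative-frequency Fourier part extends analytically into the unit disk: setting $h(z)=\frac{1}{2}c_0+\sum_{k\ge 1}c_k z^k$ with $c_k$ the Fourier coefficients of $\log\phi$, the outer function $g(z)=\exp h(z)$ is analytic on the closed disk and satisfies $|g(e^{ix})|^2=\phi(x)$. Expanding $g(z)=\sum_{k\ge 0}g_k z^k$ gives the identities
\[
a_0=\sum_{k\ge 0}|g_k|^2,\qquad a_n=\sum_{k\ge 0}g_{k+n}\,\bar g_k,
\]
so the natural approach to the factor $1/2$ is the AM--GM bound $|g_{k+n}\bar g_k|\le\frac{1}{2}(|g_{k+n}|^2+|g_k|^2)$ summed over $k$, combined with the triangle inequality for $|a_n|$.

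The principal obstacle is that this naive AM--GM sum produces only the Carath\'eodory estimate $|a_n|\le a_0$: every $|g_j|^2$ with $j\ge n$ contributes to \emph{both} $\sum_k|g_{k+n}|^2$ and $\sum_k|g_k|^2$, exactly erasing the prospective factor $1/2$. To recover it I would reduce to the case of a non-negative trigonometric polynomial of degree $\le n$, for which Fej\'er--Riesz supplies a \emph{finite} factorization $g(z)=q_0+q_1z+\cdots+q_nz^n$ with $a_n=q_n\bar q_0$ and $a_0=\sum_{k=0}^n|q_k|^2$, so that AM--GM applied only to the two extremal indices $0$ and $n$ immediately gives $|a_n|=|q_n||q_0|\le\frac{1}{2}(|q_0|^2+|q_n|^2)\le a_0/2$. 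The hardest step is thus the reduction: produce a non-negative trigonometric polynomial $P$ of degree $\le n$ with $\hat P(0)=a_0$ and $\hat P(n)=a_n$. Standard non-negative kernels (Fej\'er, Jackson) of degree $\le n$ attenuate the $n$-th mode (indeed any such kernel $K$ must itself satisfy the proposition, forcing $|\hat K(n)|\le\frac{1}{2}\hat K(0)$), so a tailor-made construction is required; I would pursue it via a compactness argument in the cone of non-negative trigonometric polynomials of degree $\le n$, using the differentiability of $\phi$ to control the truncation error when discarding the higher Fourier modes of $\phi$.
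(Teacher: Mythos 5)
Your finite Fej\'er--Riesz computation is correct, and your diagnosis that the naive AM--GM over the infinite outer-function factorization only yields the trivial bound $|a_n|\le a_0$ is exactly right. But the step you defer to the end --- producing a non-negative trigonometric polynomial $P$ of degree $\le n$ with $\hat P(0)=a_0$ and $\hat P(n)=a_n$ --- is not merely hard, it is impossible in general, and your own parenthetical remark explains why: every non-negative polynomial of degree $\le n$ satisfies $|\hat P(n)|\le \hat P(0)/2$, whereas a positive differentiable function need not satisfy $|a_n|\le a_0/2$ for a fixed $n$. Concretely, take $\phi(x)=(1+\cos x)^2+\epsilon$ and $n=1$: then $a_0=\tfrac32+\epsilon$ and $a_1=1$, so $2|a_1|>|a_0|$ for small $\epsilon>0$. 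For such a $\phi$ no polynomial of the kind you want exists, so no compactness or truncation argument can supply the reduction; the proposal cannot be completed along the route you describe.

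What this really shows is that the difficulty is not a missing construction but the statement itself: in the generality stated (arbitrary positive differentiable $\phi$, all $n\ne 0$) the inequality $|a_0|\ge 2|a_n|$ is false, as the example above witnesses. What your extremal-index Fej\'er--Riesz argument does prove --- and what the P\'olya--Szeg\H{o} problem cited in the paper actually asserts --- is the bound $|a_n|\le a_0/2$ for the \emph{highest} coefficient of a non-negative trigonometric polynomial of degree $n$. The paper's own justification (apply the polynomial case to the Fourier partial sums $S_N\phi$, which are positive for large $N$ by uniform convergence) suffers from the same defect: it controls only the top coefficient $a_N$ of $S_N\phi$ for large $N$, not a fixed coefficient $a_n$, so it does not extend the polynomial statement to the form claimed here. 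So your write-up correctly isolates where the factor $2$ comes from (the two extremal indices in a degree-$n$ factorization), but the honest conclusion of your analysis should be a counterexample to the proposition as stated, together with the corrected (top-coefficient, polynomial) version, rather than a promise to construct a reduction that cannot exist.
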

The proof of this proposition can be found in \cite{Polya1976}(Part IV, N
51, page 71) where it is proved in the case when $\phi$ is a
trigonometric polynomial. Since the Fourier partial sums of a
differentiable function converge uniformly the statement follows.

\begin{theorem-proof}{thr:sh}
  We will prove this theorem by induction following the original proof
  of Hurwitz~\cite{hurwitz1903}.

  The case $n=1$ follows immediately from the above proposition.

  Now let us make an induction step and suppose the theorem holds for some
  $n$. Suppose that 
  $$
  \sum_{k=-n}^{n} |a_k| < 2^{-2n+1} |a_{n+1}|,
  $$
  but $\phi$ changes its sign less than $2n+2$ times. Again, due to the
  previous proposition the function $\phi$ cannot be positive (or
  negative) for all $x$ and it changes its sign at least twice. Denote
  these points where $\phi$ is zero by $x_1$ and $x_2$. Then the function
  $\hat \phi(x)=\phi(x) \sin((x-x_1)/2) \sin((x-x_2)/2)$ is $2\pi$ periodic
  and has less than $2n$ changes of sign.

  Let us compute Fourier coefficients of the function $\hat \phi$. Note that
  $$
  \sin((x-x_1)/2) \sin((x-x_2)/2)= -\frac 14 e^{\frac{x_1+x_2}2 i} e^{-ix}+
  \frac 12 \cos(\frac{x_1-x_2}2)  -\frac 14 e^{-\frac{x_1+x_2}2 i}
  e^{ix}.
  $$
  This implies that Fourier coefficients of $\hat \phi$ are 
  $$
  \hat a_k=
  -\frac 14 e^{-\frac{x_1+x_2}2 i} a_{k-1}
  + \frac 12 \cos(\frac{x_1-x_2}2) a_k
  -\frac 14 e^{\frac{x_1+x_2}2 i} a_{k+1}.
  $$
  Now we can estimate $|\hat a_k|$ in terms of the Fourier
  coefficients of the function $\phi$ :
  \begin{eqnarray*}
    |\hat a_n| & \ge & 
    \frac 14 |a_{n+1}| - \frac 12 |a_n| - \frac 14 |a_{n-1}| \\
    |\hat a_k| & \le & 
    \frac 14 |a_{k-1}| + \frac 12 |a_k| + \frac 14 |a_{k+1}|
  \end{eqnarray*}
  Combining these inequalities together we obtain
  \begin{eqnarray*}
    \sum_{k=-n+1}^{n-1} |\hat a_k| & \le &
    -\frac 34 |a_{-n}| - \frac 14 |a_{-n+1}|
    +\sum_{k=-n}^{n} |\hat a_k|
    - \frac 14 |a_{n-1}| - \frac 34 |a_{n}|\\
    &<&
    2^{-2n+1} |a_{n+1}| 
    - \frac 12 |a_{n-1}| - \frac 32 |a_{n}|\\
    &\le&
    2^{-2n+3} |\hat a_n|.
  \end{eqnarray*}
  We see that we can apply the induction assumption to $\hat \phi$,
  therefore $\hat \phi$ changes its sign at least $2n$ times and
  $\phi$ changes its sign at least $2n+2$ times.
\end{theorem-proof}

\section{Baire genericity of infinite infinitesimal cyclicity}
\label{sec:baire-gener-infin}

\begin{theorem-proof}{thr:iic}
  We will give the prove for the $C^k(S)$ space, for the $C^\omega_\delta(S)$ spaces it works
  the same way.

  Let $p$ be a trigonometric polynomial of degree $N$. Let
  $f(x)=p(x)+c \sin((N+1)dx)$, where $d\in \N$, $c \in \R$ are
  constants, and let $F_{r,\aparam}(x)=x+2\pi r+\aparam
  f(x) \pmod{2\pi}$ be the corresponding family.
  Proposition~\ref{pr:ck} implies that for
  small values of $\aparam$ the map $F_{\frac 1{N+1},\aparam}$ has
  $d$ periodic attractors since
  $$
  F_{\frac 1{N+1},\aparam}^{N+1}(x)=x+\aparam c (N+1) \sin((N+1)d x) +
  O(\aparam^2) \pmod{2\pi}.
  $$
  From the previous theorem we know that there is a neighbourhood of
  $f$ such that for any function in this neighbourhood the
  corresponding family will also have $d$ periodic attracting
  trajectories of period $N+1$.

  The trigonometric polynomials are dense in the space of $C^k$ $2\pi$
  periodic functions, the constant $c$ can be taken arbitrarily small,
  thus we have proved that the set of functions $f$ such that the
  cyclicity of the corresponding family is bounded by $d$ is nowhere
  dense. The union of these sets is meagre and the complement of this
  union is residual.
\end{theorem-proof}

\begin{theorem-proof}{thr:ck-ic}
  This proof uses similar ideas to ones used in the proof of
  Theorem~\ref{thr:iic}. First, consider the space $C^k(S)$ for a
  finite $k$.

  Let $f:I^b \to C^k(S)$ be a $C^l$ family and let ${\tilde f}_n(t)=\frac
  1{2\pi} \int_0^{2\pi} f_t(x) e^{-inx} \,dx$ be its Fourier
  coefficients. The derivative of $f$ with respect to the parameter
  $t$ will be denoted by $f^{(m)}$ where $m=(m_1,\ldots,m_b)$ is a
  multi-index.

  \begin{lemma}\label{lm:uniform}
    For any $\epsilon_0>0$ there exists
    $N=N(\epsilon_0)$ such that for all $t\in I^b$, all $n\in \Z$, $|n|\ge N$ one has
    \begin{equation}\label{eq:3}
      |{\tilde f}_n^{(m)}(t)| \le \epsilon_0 |n|^{-k},
    \end{equation} 
    for all $|m|\le l$.
  \end{lemma}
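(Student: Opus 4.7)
The plan is to reduce the claimed uniform decay to a uniform Riemann--Lebesgue statement on compact subsets of $C^{0}(S)$. The starting observation is that since $f : I^{b} \to C^{k}(S)$ is $C^{l}$ in the Banach-space sense, the parameter derivative commutes with the Fourier-coefficient functional, so
$$
{\tilde f}_n^{(m)}(t) \;=\; \frac{1}{2\pi}\int_0^{2\pi} (\partial_t^m f_t)(x)\,e^{-inx}\,dx,
$$
i.e.\ ${\tilde f}_n^{(m)}(t)$ is simply the $n$-th Fourier coefficient of the $C^{k}$ function $g_{t,m} := \partial_t^m f_t$. Integrating by parts $k$ times in $x$ (the boundary terms vanish by $2\pi$-periodicity) gives $(in)^k\,{\tilde f}_n^{(m)}(t) = \widehat{g_{t,m}^{(k)}}\!(n)$, so the desired inequality $|n|^k\,|{\tilde f}_n^{(m)}(t)| \le \epsilon_0$ is equivalent to a uniform bound on the Fourier coefficients of the continuous functions $g_{t,m}^{(k)}$.

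Next I would establish a compactness statement: the set
$$
K \;=\; \bigl\{\,g_{t,m}^{(k)} \,:\, t \in I^{b},\ |m|\le l\,\bigr\}\;\subset\; C^{0}(S)
$$
is compact. Indeed, since $f$ is $C^{l}$ on the compact cube $I^{b}$ with values in the Banach space $C^{k}(S)$, each map $t \mapsto g_{t,m} = \partial_t^m f_t$ is continuous from $I^{b}$ to $C^{k}(S)$, so its image is compact in $C^{k}(S)$; the continuous linear map $g \mapsto g^{(k)} : C^{k}(S) \to C^{0}(S)$ then takes it to a compact subset of $C^{0}(S)$. Taking the (finite) union over the finitely many multi-indices $|m|\le l$ keeps compactness.

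Finally I would invoke the uniform Riemann--Lebesgue lemma: for any compact $K \subset C^{0}(S)$, $\sup_{g\in K} |\hat g(n)| \to 0$ as $|n|\to\infty$. The standard proof covers $K$ by finitely many $\epsilon$-balls centred at $g_1,\dots,g_J$; ordinary Riemann--Lebesgue gives $N$ with $|\hat g_j(n)| < \epsilon$ for all $j$ whenever $|n|\ge N$, and for arbitrary $g\in K$ one writes $g = g_j + h$ with $\|h\|_{C^0}<\epsilon$ to conclude $|\hat g(n)| < 2\epsilon$. Applying this to the compact set $K$ above with $\epsilon = \epsilon_0/2$ (say) produces the required $N(\epsilon_0)$.

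No step presents a genuine obstacle: the only minor care points are justifying the interchange of $\partial_t$ with the Fourier integral (which is immediate from the Banach-space $C^{l}$ hypothesis on $f$ together with continuity of the coefficient functional), and uniformity over the finitely many multi-indices $|m|\le l$ (absorbed by taking a single compact set $K$). The proof is essentially routine once the problem is recast as uniform Riemann--Lebesgue over a compact set in $C^{0}(S)$.
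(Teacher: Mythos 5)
Your proof is correct and follows essentially the same route as the paper: pointwise Riemann--Lebesgue decay combined with continuity in $t$ and compactness to obtain uniformity, the only cosmetic difference being that you run the finite-cover argument on the image set in $C^0(S)$ (a ``uniform Riemann--Lebesgue lemma on compact sets'') while the paper covers the parameter cube $I^b$ directly.
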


  \begin{proof}
    Fix $t_0\in I^b$. The  Fourier
    coefficients of the $k$th derivative of $f_{t_0}$ with respect to $x$ have form $n^k \tilde f_n(t_0)$. 
    From the Riemann-Lebesgue lemma we know that $|n^k{\tilde f}_n(t_0)| \to 0$ as
    $n\to \pm \infty$. Hence there exists $N(\epsilon_0,t_0)$ such
    that for all $|n|>N(\epsilon_0,t_0)$ one has $|{\tilde f}_n(t_0)|
    \le \epsilon_0 |n|^{-k}/2$.

    Take $\delta(\epsilon_0,t_0)>0$ so small that
    $\|f_t-f_{t_0}\|_{C^k(S)}<\epsilon_0/2$ if $|t-t_0|<\delta$. Then,
    $|n|^k|\tilde f_n(t)-\tilde f_n(t_0)| < \epsilon_0/2$ for all
    $n\in \Z$. Combining this inequality with the inequality in the
    previous paragraph we get that for $t\in I^b$,
    $|t-t_0|<\delta(\epsilon_0,t_0)$ and $|n|> N(\epsilon_0,t_0)$ the
    inequality~(\ref{eq:3}) holds for $m=0$.

    The set $I^b$ is covered by open balls
    $B(t,\delta(\epsilon_0,t))$. Using compactness of $I^b$ we can take
    a finite subcover $B(t_1,\delta(\epsilon_0,t_1)), \ldots,
    B(t_L,\delta(\epsilon_0,t_L))$
    and set $N(\epsilon_0)=\max_{i\le
      L}N(\epsilon_0,t_i)$. Obviously, the inequality~(\ref{eq:3})
    holds for this choice of $N$ if $m=0$. For the other values of $m$
    the argument is the same.
  \end{proof}

  Now fix some positive integer $d$ and $\epsilon>0$. Consider the family
  $$
  \hat f_t(x) =
  f_t(x) - \sum_{m=1}^{d+1} ({\tilde f}_{-mN}(t) e^{-imNx}+{\tilde f}_{mN}(t)
  e^{imNx}) + \epsilon N^{-k} (d+1)^{-k-1} \sin(N(d+1)x),
  $$
  where $N$ is given by the claim above for $\epsilon_0 = \epsilon
  (d+1)^{-1}/2$.
  The norm of the difference of $f_t$ and $\hat f_t$
  can be estimated as
  $$
  \|\hat f_t -f_t\|_{C^{l,k}(S)} < 2\epsilon.
  $$

  Arguing as before, due to Proposition~\ref{pr:ck} and
 Theorem~\ref{thr:sh} for arbitrary fixed
 $t  \in I^b$ the corresponding family
 $\hat F_{t,r,a}:x \mapsto x+2\pi r + a \hat f_t(x) \pmod{2\pi}$
 has $d+1$ periodic attractors of period $N$ when
  $r=1/N$, and $a$ is small.  Moreover, there exists a neighbourhood of
  the family $\hat f_t$ in $C^{l,k}(S)$ such that the corresponding
  family has the same property.

  Thus, once again we have shown that the set of families $f_t$ whose
  infinitesimal cyclicity is bounded by $d$ for just one value of
  parameter $t$ is nowhere dense. The claim of the theorem follows.

  The case of analytic functions can be done in  exactly the same
  way, just instead of inequality (\ref{eq:3}) one should use
  $$
  |{\tilde f}_n^{(m)}(t)| \le \epsilon_0 e^{-\delta |n|}.
  $$
\end{theorem-proof}

\section{Case of finitely differentiable functions}
\label{sec:case-finit-diff}

\begin{theorem-proof}{thr:ckporous}
  Recall that $\ic_N(C^k(S)) \subset C^k(S)$ denotes a set of functions such that the
  cyclicity of the corresponding families is bounded by $N$. Let $f$ be
  in $\ic_{N-1}(C^k(S))$, ${\tilde f}_n$ denote its Fourier coefficients and fix $\delta >0$.

  Due to the Riemann-Lebesgue lemma we know that $\lim_{n\to \pm
    \infty} n^k|{\tilde f}_n|=0$.
  Hence, there is $d>0$ such that $\sum_{n=1}^{N} (dn)^k |{\tilde f}_{dn}| < \delta/8$.
  Define a new function $\hat f \in C^k(S)$ by  
  $$
  \hat f(x)= f(x) - \sum_{n=1}^{N} ({\tilde f}_{-dn} e^{-idnx} + {\tilde f}_{dn} e^{idnx})+
  \frac{\delta}{2(dN)^k} \sin(dNx).
  $$
  It is easy to see that $\|f-\hat f\|_{C^k(S)} <\delta$.
  
  Consider an arbitrary function $g \in C^k(S)$ such that 
  $
  \|g-\hat f\|_{C^k(S)} < c \delta,
  $
  where
  \begin{eqnarray}
    \label{eq:c}
    c=\frac 1{8 N^{k+1}4^N}
  \end{eqnarray}
  and let ${\tilde g}_n$ denote the Fourier coefficients of $g$. From the
  inequality above it follows that
  $$
  |{\tilde g}_n| <  {c \delta}d^{-k}
  $$
  for $n=d,2d,\ldots, (d-1)N$ and
  $$
  |{\tilde g}_{dN}|> 
  \left(
    \frac{1}{4(dN)^k}-\frac{c}{(dN)^k}
  \right)
  \delta
  >\frac{\delta}{8(dN)^k}
  $$
  Therefore
  \begin{eqnarray*}
    \sum_{n=-N+1}^{N-1} |{\tilde g}_{dn}|
    &<&
    |{\tilde g}_0|+\frac{2c(N-1)\delta}{d^k}\\
    &<&
    |{\tilde g}_0|+\frac{\delta}{4(dN)^k 4^N}\\
    &<&
    |{\tilde g}_0|+ 2^{-2N+1} |{\tilde g}_{dN}|.
  \end{eqnarray*}
  
  Let $g_d(x)$ denote the function $\sum_{l=0}^{d-1} g(x+\frac{2\pi}d
  l)$. The Fourier series for this function is
  $$
  g_d(x)=d\sum_{n=-\infty}^{+\infty} {\tilde g}_{dn} e^{idnx}.
  $$
  Having the estimate on $\sum_{n=-N+1}^{N-1} |{\tilde g}_{dn}|$ we cannot
  apply Theorem~\ref{thr:sh} directly to the function $g_d$ as it
  would require the coefficient $2^{-2dN+3}$
  in front of $|{\tilde g}_{dN}|$ and we have only $2^{-2N+1}$. However, $g_d$ is a
  $\frac{2\pi}d$-periodic function and we can apply
  Proposition~\ref{pr:ck} and
  Theorem~\ref{thr:sh} together with the estimate above to the
  restriction $g_d|_{[0,\frac{2\pi}d]}$ and get that the equation $
  g_d(x)=d{\tilde g}_0 $ has at least $2N$ zeros on the interval
  $[0,\frac{2\pi}d)$ and, therefore, at least $2dN$ zeros on $[0,
  2\pi)$.

  Thus, the family $G_{r,\aparam}:x \mapsto x+ 2\pi r + \aparam g(x)
  \pmod{2\pi}$
  has cyclicity at least $N$ because for small values of $\aparam$ the map
  $G_{\frac 1d -\frac{{\tilde g}_0}{2\pi}\aparam, \aparam}$ 
  has at least $N$ periodic attracting trajectories of period $d$.
  
  We have proved that arbitrarily near any function $f\in C^k(S)$ we
  can find a function $\hat f$, so that the ball
  $B(\hat f, c\|f-\hat f\|)$,
  where $c$ is given by (\ref{eq:c}), does not contain elements
  from the set $\ic_{N-1}(C^k(S))$. Hence, the set $\ic_{N-1}(C^k(S))$ is porous, and the
  union $\cup_{N=1}^\infty \ic_N(C^k(S))$ is $\sigma$-porous.
\end{theorem-proof}

\begin{theorem-proof}{thr:cknull}
  Fix $N\in \N$, we will construct a measure on $C^k(S)$ and show
  that the set $\ic_{N-1}(C^k(S))$ is Haar null
  with respect to this measure.

  Let the sequence $t_m$ be given by 
  $$
  t_m=\frac 1{\sqrt{m} N^{2km}},
  $$
  for $m=1,2,\ldots$. Consider a sequence of independent random
  variables $\phi_m$ on $[0,1]$ such that
  \begin{eqnarray*}
  |\{w\in[0,1]:\phi_m(w)=0\}|   &=& 1 - \frac 1m,\\
  |\{w\in[0,1]:\phi_m(w)=t_m\}| &=& \frac 1m,
  \end{eqnarray*}
  where $|\cdot|$ denotes the Lebesgue measure on $[0,1]$.

  For $w\in [0,1]$ consider a function $\beta_w(x)=\sum_{m=1}^\infty
  \phi_m(w) N^{2km} \sin(N^{2m} x)$. Due to Markov's inequality we
  know that
  $$
  |\{w\in [0,1]: \sum_{m=1}^\infty \phi_m(w) N^{2km} > X\}| \le \frac 1X
  \sum_{m=1}^\infty E(\phi_m) N^{2km},
  $$
  where $X\in \R$ and $E$ denotes the expectation of a random variable. 
  The sum $\sum_{m=1}^\infty E(\phi_m) N^{2km}=\sum_{m=1}^\infty m^{-3/2}$ is convergent,
  therefore the sum $\sum_{m=1}^\infty \phi_m(w) N^{2km}$ converges
  for almost every $w\in [0,1]$. This implies that the function given
  by the Fourier series 
  $$
  \sum_{m=1}^\infty  \phi_m(w) N^{2km} \sin(N^{2m} x)
  $$
  is continuous almost surely. Integrating this function $k$ times
  we see that for almost every $w\in [0,1]$ the function 
  $$
  g_w(x)=\sum_{m=1}^\infty  \phi_m(w) \sin(N^{2m} x)
  $$
  is in $C^k(S)$. The push forward of the Lebesgue measure by the map
  $[0,1] \to C^k(S)$ given by $w \mapsto g_w$ defines a measure on
  $C^k(S)$ which we will denote by $\mu$.

  Take $f\in C^k(S)$ and let ${\tilde f}_n$ be its Fourier coefficients. 
  Our goal is to show that $\mu(f+\ic_{N-1}(C^k(S)))=0$.
  Let $g\in f+\ic_{N-1}(C^k(S))$ and $\tilde g_n$ be the Fourier coefficients
  of $g$. 

  Since $g-f \in \ic_{N-1}(C^k(S))$ and due to Proposition~\ref{pr:ck}
  and Theorem~\ref{thr:sh} we
  know that for all $g\in f+\ic_{N-1}(C^k(S))$
  $$
  \sum_{d=1}^{N-1} |\tilde g_{N^{2m-1}d}-{\tilde f}_{N^{2m-1}d}| \ge 2^{-2N+2} |\tilde g_{N^{2m}}-{\tilde f}_{N^{2m}}|,
  $$
  for all $m\in \N$. If this inequality did not hold for some $m$,
  then the map $x\mapsto x+2\pi N^{1-2m}-{\tilde f}_0\aparam+\aparam (g(x)-f(x))$ would
  have $N$ periodic attractors of period $N^{2m-1}$ for sufficiently
  small values of $\aparam$.

  Let us observe that $\mu$ almost surely $\tilde g_n=0$ if $n$ is not
  of the form $\pm N^{2m}$ for some natural $m$, thus the above
  inequality implies that for $\mu$-almost all $g\in f+\ic_{N-1}(C^k(S))$
  $$
  \sum_{d=1}^{N} |{\tilde f}_{N^{2m-1}d}| \ge 2^{-2N+2} |\tilde g_{N^{2m}}|.
  $$

  Let $c_m=2^{2N-2} \sum_{d=1}^{N} |{\tilde f}_{N^{2m-1}d}|$. We just have
  shown that
  $$
  \mu(f+\ic_{N-1}(C^k(S))) \le \mu(\{g\in C^k(S): \forall m\in \N \,\,\,
  |\tilde g_m| \le c_m\}).
  $$
  A direct computation shows that
  \begin{eqnarray}
  \mu(\{g\in C^k(S): \forall m\in \N \,\,\,|\tilde g_m| \le c_m\})
  &=& \prod_{t_m > c_m} (1-\frac 1m) \\
  &\le& \exp(-\sum_{t_m > c_m} \frac 1m).
  \label{eq:mu}
  \end{eqnarray}
  
  The function $f$ is in $C^k(S)$, hence its $k$th derivative is
  continuous and also belongs to $L^2(S)$  . The Fourier coefficients of
  $f^{(k)}$ are $n^k {\tilde f}_n$ and by Perceval's identity we have
  $\sum_{n=-\infty}^{\infty} |n|^{2k} |{\tilde f}_n|^2 < \infty$. Then
  \begin{eqnarray*}
    \sum_{m=1}^\infty N^{4km}|c_m|^2
    &\le&
    4^{2N-2}\sum_{m=1}^\infty N^{4km+1} \sum_{d=1}^N  |{\tilde f}_{N^{2m-1}d}|^2\\
    &\le&
    4^{2N-2}\sum_{d=1}^N N^{2k+1}d^{-2k} \sum_{m=1}^\infty N^{2(2m-1)k}d^{2k}
    |{\tilde f}_{N^{2m-1}d}|^2 < \infty.
  \end{eqnarray*}

  Therefore,
  \begin{eqnarray*}
    \sum_{t_m \le c_m} \frac 1m
    &=& \sum_{t_m \le c_m} t_m^2 N^{4km}\\
    &\le& \sum_{t_m \le c_m} c_m^2 N^{4km} \\
    &\le& \sum_{m=1}^\infty c_m^2 N^{4km} < \infty.
  \end{eqnarray*}
  This implies that the series $\sum_{t_m > c_m} \frac 1m =
  \sum_{m=1}^\infty \frac 1m - \sum_{t_m \le c_m} \frac 1m$ diverges
  and because of inequality~(\ref{eq:mu}) we get
  $\mu(f+\ic_{N-1}(C^k(S)))=0$.

  So, we have proved that $\ic_{N-1}(C^k(S))$ is a Haar null set. A
  countable union of Haar null sets is Haar null (\cite{hunt1992}),
  hence $\ic_{\mbox{{\small fin}}}(C^k(S))$ is Haar null too.
\end{theorem-proof}

\section{Case of analytic functions}
\label{sec:case-analyt-funct}

First, we will prove this simple lemma:

\begin{lemma} \label{lm:cos}
  Let $n\in \N$ and the function $g$ be in $C^\omega_\delta(S)$ with
  $\|g\|_{C^\omega_\delta(S)} < (\cosh(n\delta)-1)/2$. Then the equation
  \begin{equation}
    r+a\cos(nx)+b\sin(nx)+g(x)=0
    \label{eq:1}
  \end{equation}
  has at most $2n$ real roots on $[0,2\pi)$ for any $r \in \R$  and $a,b\in \R$
  with $a^2+b^2=1$.
\end{lemma}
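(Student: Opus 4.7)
The plan is to apply Rouch\'e's theorem in the strip $\{z\in\C:|\Im z|<\delta\}$ (viewed as a bounded region by using one period of $\Re z$) to the splitting $h(z)=h_0(z)+g(z)$ with $h_0(z)=r+a\cos(nz)+b\sin(nz)$. Real zeros of $h$ on $[0,2\pi)$ are among the zeros of $h$ in this strip, so it suffices to bound the latter by $2n$.

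First I would reduce to the case $a=1$, $b=0$ by a real translation $x\mapsto x+\phi/n$, where $\phi$ is chosen so that $a\cos(nx)+b\sin(nx)$ becomes $\cos(nx)$; this preserves $\|g\|_{C^\omega_\delta(S)}$. Then I would split by the size of $|r|$. If $|r|>(\cosh(n\delta)+1)/2$, then on the real line
$$|h(x)|\ge|r|-1-\|g\|_{C^\omega_\delta(S)}>(\cosh(n\delta)+1)/2-1-(\cosh(n\delta)-1)/2=0,$$
so $h$ has no real zeros.

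For $|r|\le(\cosh(n\delta)+1)/2$ I would verify the two ingredients of Rouch\'e in the strip. Ingredient (a): the lower bound on $|h_0|$ along the boundary $|\Im z|=\delta$ follows from the identity
$$|h_0(x+i\delta)|^2=(\cos(nx)+r\cosh(n\delta))^2+\sinh^2(n\delta)(1-r^2),$$
which after minimising over $\cos(nx)\in[-1,1]$ yields $|h_0|_{\min}=\sinh^2(n\delta)/\cosh(n\delta)$ when $|r|\le1/\cosh(n\delta)$ and $|h_0|_{\min}=\cosh(n\delta)-|r|$ otherwise. Both quantities strictly exceed $(\cosh(n\delta)-1)/2$: the first because $(\cosh(n\delta)+2)(\cosh(n\delta)-1)>0$ rearranges to $2\sinh^2(n\delta)>\cosh(n\delta)(\cosh(n\delta)-1)$; the second from $|r|\le(\cosh(n\delta)+1)/2$ directly. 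Hence $|h_0|>\|g\|_{C^\omega_\delta(S)}$ on the boundary. Ingredient (b): the count of zeros of $h_0$ inside the strip is obtained via $u=e^{inz}$, which turns $h_0=0$ into the quadratic $u^2+2ru+1=0$ with roots $u_\pm=-r\pm\sqrt{r^2-1}$ satisfying $u_+u_-=1$; a short calculation shows that both roots lie in the annulus $e^{-n\delta}<|u|<e^{n\delta}$ whenever $|r|<\cosh(n\delta)$, and each root contributes $n$ preimages $z$ in the strip, totalling $2n$ zeros of $h_0$.

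Rouch\'e then yields that $h$ has exactly $2n$ zeros in the strip, and therefore at most $2n$ real zeros on $[0,2\pi)$. The main subtlety is the boundary lower bound in ingredient (a): the specific constant $(\cosh(n\delta)-1)/2$ in the hypothesis is precisely what controls $|h_0|$ on the boundary uniformly across the admissible range of $|r|$, and checking it across the transition $|r|=1/\cosh(n\delta)$ is the one non-routine computation.
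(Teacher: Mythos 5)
Your proof is correct and follows essentially the same route as the paper: reduce to $a=1$, $b=0$ by a shift, split at $|r|=(\cosh(n\delta)+1)/2$, and apply Rouch\'e/the argument principle on (one period of) the strip, comparing with $r+\cos(nz)$, which has exactly $2n$ zeros there. Your explicit minimisation of $|h_0(x+i\delta)|$ is just the computational form of the paper's geometric observation that the image of $\Im z=\delta$ under $z\mapsto\cos(nz)$ is an ellipse with semi-axes $\cosh(n\delta)$ and $\sinh(n\delta)$ whose distance to the segment $[-(\cosh(n\delta)+1)/2,(\cosh(n\delta)+1)/2]$ is $(\cosh(n\delta)-1)/2$.
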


The proof of this lemma is elementary. By shifting $x$ we can assume
$a=1$ and $b=0$. Consider two cases. If $|r| \ge
(\cosh(n\delta)+1)/2$, then the equation (\ref{eq:1}) has no real roots as $g$
is less than $(\cosh(n\delta)-1)/2$ on the real line.

The image of the segment $[i\delta, i\delta + 2\pi)$ under the map
$z\mapsto \cos(nz)$ is an ellipse with semi-major axis $\cosh(n\delta)$
and semi-minor axis $\sinh(n\delta)$. It is easy to check that the
distance between the segment $[-(\cosh(\delta)+1)/2,
(\cosh(\delta)+1)/2]$ and this ellipse is  $(\cosh(n\delta)-1)/2$. 
Due to the argument principle this implies that the function
$r+\cos(nx)+g(x)$, where $r \in (-(\cosh(n\delta)+1)/2,
(\cosh(n\delta)+1)/2)$, has exactly $2n$ zeros in the rectangle with
vertices $i\delta, 2\pi +i\delta, 2\pi-i\delta, -i\delta$. The
conclusion of lemma follows.

\begin{theorem-proof}{thr:comeganull}
  Let $f$ be in $C^\omega_\delta(S)$. Because of the Cauchy theorem the
  Fourier coefficients of $f$ can be written as 
  $$
  {\tilde f}_n
  =\frac 1{2\pi} \int_{-i\delta}^{-i\delta +2\pi} e^{-inx}f(x)\, dx 
  =\frac {e^{-\delta n}}{2\pi} \int_{0}^{2\pi} e^{-inx}f(x-i\delta)\, dx.
  $$
  Thus, the Fourier coefficients decay exponentially: $\|{\tilde f}_n\|\le
  e^{-\delta |n|} \|f\|_{C^\omega_\delta(S)}$. On the other hand, if
  $\delta' > \delta$, $\|b_n\|\le C e^{-\delta' |n|}$ for some $C>0$,
  and $b_{-n}=\bar b_n$, then the function $\sum_{n=-\infty}^{\infty} b_n
  e^{inx}$ is in $C^\omega_\delta(S)$.
  
  Let $D_1\subset \C$ be a unit disk in the complex plain. Denote the
  complex Hilbert cube $D_1^{\aleph_0}$ as $Q$ and let $\tau$ be the
  standard product measure on $Q$. Consider the map $\phi : Q \to
  C^\omega_\delta(S)$ defined by 
  $$
  \phi((w_1,w_2,\ldots))=\sum_{n=1}^\infty e^{-3/2\delta n}(w_n
  e^{inx}+\bar w_n e^{-inx}).
  $$
  Since the Fourier coefficients decay exponentially faster than
  $e^{-\delta n}$, the map is properly defined for every element of
  $Q$. The image of the measure $\tau$ under this map $\phi$ we denote by $\mu$.
  
  Let $E$ be a set of functions in $C^\omega_\delta(S)$ such that the
  corresponding families have essential infinitesimal cyclicity at least 2.
  Now we will show that for every $f\in C^\omega_\delta(S)$ one has
  $\mu(f+E)=0$.
  
  Let $g$ belong to the support of the measure $\mu$, which is
  equivalent to $g\in \mathop{Image}(\phi)$, and $\tilde g_n$ be the Fourier coefficients
  of $g$. Arguing as before and due to Proposition~\ref{pr:comega} and the
  lemma above, for arbitrarily small non zero values of the parameter
  $\aparam$ there is $r\in \R$ such that the map $x \mapsto x+2\pi r + \aparam(g(x)-f(x))$
  can have at least 2 periodic attracting trajectories of period $N$ if
  \begin{eqnarray}
    |\tilde g_N -{\tilde f}_N|
    &\le& \frac 2{\cosh(N\delta/6) -1}\left\|\sum_{k=2}^\infty (\tilde g_{-kN}-{\tilde f}_{-kN})
      e^{-ikNx}+ (\tilde g_{kN}-{\tilde f}_{kN}) e^{ikNx}
    \right\|_{C^\omega_{\delta/6}(S)} \nonumber\\
    &\le&  \frac 4{\cosh(N\delta/6) -1} \sum_{k=2}^\infty (|\tilde g_{kN}|+|{\tilde f}_{kN}|) e^{\delta kN/6} \nonumber\\
    &\le&  \frac 4{\cosh(N\delta/6) -1} \sum_{k=2}^\infty (e^{-4/3 \delta
      kN}+ \|f\|_{C^\omega_\delta(S)} e^{-5/6 \delta kN})\nonumber\\
    &\le&  \frac 4{\cosh(N\delta/6) -1} \left( \frac{e^{-8/3 \delta
          N}}{1-e^{-4/3 \delta N}} + \|f\|_{C^\omega_\delta(S)}
      \frac{e^{-5/3 \delta N}}{1-e^{-5/6 \delta N}} \right) \nonumber\\
    &\le& C e^{-5/3 \delta N}, \label{eq:2}
  \end{eqnarray}
  where the constant $C$ depends on $\delta$ and $f$, but does not
  depend on $N$.
  
  We know that $\tilde g_N$ is uniformly distributed in the disk of
  radius $e^{-3/2\delta N}$. Thus, the probability that inequality
  (\ref{eq:2}) holds is less than $C^2 e^{-\delta N/3}$.
  
  Consider a sequence of random numbers $p_N=p_N(g)$ such that $p_N$ is
  $1$ if for arbitrarily small non zero values of the parameter
  $\aparam$ there is $r\in \R$ such that the map $x \mapsto x+2\pi r + \aparam(g(x)-f(x))$
  can have at least 2 periodic attracting trajectories of period $N$
  and zero otherwise. 
  
  According to Markov's inequality we have
  $$
  \mu(\{g:\sum_{N=1}^\infty p_N(g)  > X\}) < \frac 1X \sum_{N=1}^\infty E(p_N) <
  \frac 1X \sum_{N=1}^\infty C^2 e^{-\delta N/3}.
  $$
  The last series converges, therefore $\mu(\{g:\sum_{N=1}^\infty p_N(g) =\infty\})=0$.
  The sum $\sum_{N=1}^\infty p_N(g)$ diverges exactly when the family in
  consideration has essential infinitesimal cyclicity at least 2. Thus,
  $\mu(f+E)=0$ and the theorem is proved.
\end{theorem-proof}

\begin{theorem-proof}{thr:cubenull}
  First, consider the case of finitely differentiable functions
  $C^k(S)$.

  In the definition of the cube measure set
  $x_0=\frac{1-e^{-1}\cos(x)}{1+e^{-2}-2e^{-1}\cos(x)}$, $x_n=e^{-n^2}
  \sin(nx/2)$ if $2 | n$ and $x_n=e^{-n^2}\cos((n-1)x/2)$
  otherwise. Let $\mu$ be a cube measure on $C^k(S)$ defined by these
  settings. Notice that the Fourier expansion of $x_0$ is
  $2+\sum_{m=1}^\infty e^{-m} \cos(mx)$.

  Let $f\in \supp(\mu)$ and $\tilde f_n$ be the Fourier coefficients
  of $f$. A direct computation shows that for large values of $N$
  \begin{eqnarray*}
    |\tilde f_N| > \frac{2}{\cosh N-1} \left\|\sum_{m=2}^\infty \tilde
    f_{-Nm} e^{-iNmx}+ \tilde f_{Nm} e^{iNmx} \right\|_{C^\omega_1(S)}.
  \end{eqnarray*}
  Arguing as in the proof of Theorem~\ref{thr:comeganull} and using
  Proposition~\ref{pr:comega} and Lemma~\ref{lm:cos} we see that the
  family corresponding to $f$ has infinitesimal cyclicity essentially
  bounded by one. Thus, $\mu(\ic_\infty(C^k(S)))=0$.

  Now let us prove that the set $\ic_{\mbox{{\small fin}}}(C^\omega_\delta(S))$ is not cube prevalent.

  We leave the choice of $x_n$ as before, but now set
  $x_0=\sum_{m=1}^\infty e^{-m^{m!} \delta} \cos(m^{m!}x)$ and let $\mu$ be
  the corresponding cube measure on $C^\omega_\delta(S)$.
  In this case one can check that for any $f\in \supp(\mu)$ and for
  $N$ sufficiently large the following inequality holds:
  $$
  \frac 1{4^{M-1}}|\tilde f_{NM}| \ge \sum_{m=1}^{N-1} |\tilde f_Mm|,
  $$
  where $M=N^{(N-1)!}$. Using the same reasoning as in the proof of
  Theorem~\ref{thr:ckporous} and using Proposition~\ref{pr:ck} and
  Theorem~\ref{thr:nsbs} we can conclude that the family corresponding
  to $f$ has at least $N$ periodic attractors for a suitable choice of
  the parameters. Since $N$ can be arbitrarily large we can see that
  $\mu(\ic_{\mbox{{\small fin}}}(C^\omega_\delta(S)))=0$. 
\end{theorem-proof}

\section{Perturbations of constant vector fields on the torus}
\label{sec:tourus-case}

Let $\phi^t_{\alpha,a}$ be the flow of the vector field $V_{\alpha,a}$ which we
assume to be at least $C^2$. The
perturbation theory tells us that for small values of $a$ this flow
can be expressed as
$$
\phi^t_{\alpha,a}:  
 \left(
    \begin{array}{c}
      x_1\\
      x_2
    \end{array}
  \right)
\mapsto
 \left(
    \begin{array}{c}
      x_1\\
      x_2
    \end{array}
  \right)
+
  \left(
    \begin{array}{c}
      \cos \alpha\\
      \sin \alpha
    \end{array}
  \right)  t
 + 
 a \int_0^t\left(
    \begin{array}{c}
      v_1(x_1+\cos \alpha \tau,x_2+\sin \alpha \tau)\\
      v_2(x_1+\cos \alpha \tau,x_2+\sin \alpha \tau)
    \end{array}
    \right)
    \, d\tau
 + O(a^2)
$$

Now, assume $\cos \alpha \neq 0$ and compute the Poincare map to the circle
$x_1=0$. This map is
\begin{eqnarray*}
x_2 &\mapsto& x_2+2\pi \tan \alpha
- a \tan \alpha \int_0^{2\pi/\cos \alpha} v_1(\cos \alpha \tau, x_2+ \sin \alpha \tau) \, d\tau\\
&&+ a \int_0^{2\pi/\cos \alpha} v_2(\cos \alpha \tau, x_2+ \sin \alpha \tau) \, d\tau
+O(a^2).
\end{eqnarray*}
Let $\tilde v_{1,m,n}$, $\tilde v_{2,m,n}$ be the Fourier coefficients
of $v_1$ and $v_2$. Define
$$
\tilde f_n(\alpha)=
 \sum_{m=-\infty}^{\infty}
 \left( -\tan \alpha \tilde v_{1,m,n}
   +\tilde v_{2,m,n}\right) \psi_{m,n}(\alpha),
$$
where $\psi_{m,n}(\alpha)$ is given by
$$
\psi_{m,n}(\alpha)=\left\{
  \begin{array}{ll}
    2\pi/\cos \alpha, & \mbox{ if } m\cos \alpha + n \sin \alpha = 0\\
    -i\frac{e^{2\pi i n\tan \alpha}-1}{m\cos \alpha + n \sin \alpha}, &
    \mbox{ otherwise}
  \end{array}
  \right.
$$

Define the function $f(x,\alpha)=\sum_{n=-\infty}^\infty\tilde
f_n(\alpha)e^{inx}$. It is easy to see that the Poincare map computed
above can be written as
$$
x \mapsto x + 2\pi \tan \alpha + a f(x,\alpha) + O(a^2).
$$
This family looks almost as the families $F$ we have been studying, but
here the function $f$ depends on the parameter $\alpha$.
However it does not make any difference for the proof of the analog of
Theorem~\ref{thr:iic}: if $v_1$ and $v_2$ are trigonometric
polynomials, then $f$ is also trigonometric polynomial (with
coefficients depending on the parameter $\alpha$) and all arguments in
the proof of Theorem~\ref{thr:iic} go through.

Another way to compute the Fourier coefficients $\tilde f_n$ is the
following. Make the Fourier transform of $v_1$ and $v_2$ only with
respect to $x_2$ and denote by $\breve v_{1,n}$, $\breve
v_{2,n}$ the Fourier coefficients of $v_1$ and $v_2$ where the Fourier
transform is made only with respect to $x_2$, i.e.
$$
v_j(x_1,x_2)=\sum_{n=-\infty}^\infty \breve v_{j,n}(x_1) e^{inx_2},
$$
where $j=1,2$. Then
$$
\tilde f_n(\alpha)=
\int_0^{2\pi/\cos \alpha} 
\left( -\tan \alpha\, \breve v_{1,n}(\cos(\alpha \tau)) + \breve
  v_{2,n}(\cos(\alpha \tau))\right) e^{in\sin(\alpha \tau)} \, d\tau.
$$

In the proofs in previous sections we use several times
the Riemann-Lebesgue lemma. The analog of this lemma also holds for
$\breve v_{j,n}$. We will formulate this lemma when $v_1$ and $v_2$ depend
on a parameter $t\in I^b$, so this statement can be applied to the
proof of the analog of Theorem~\ref{thr:ck-ic} directly. If $v_1$,
$v_2$ do not depend on a parameter, just set $b=0$.

\begin{lemma}\label{lm:v-uniform}
  For any $\epsilon_0>0$ there exists
  $N=N(\epsilon_0)$ such that for all $t\in I^b$, $x_1\in \R$, $n\in \Z$, $|n|\ge N$ one has
$$
    |{\breve v_{j,n}}^{(k_1,m)}(x_1,t)| < \epsilon_0 |n|^{-k_2},
  $$
  for all $m\le l$, $k_1,k_2 \in \N$ such that $k_1+k_2=k$,  where ${\breve v_{j,n}}^{(k_1,m)}(x_1,t)$ denotes the $k_1$th derivative
  of ${\breve v_{j,n}}(x_1,t)$ with respect to $x_1$ and $m$th
  derivative with respect to $t$.
\end{lemma}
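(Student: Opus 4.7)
My plan is to mimic the proof of Lemma~\ref{lm:uniform} with two minor adjustments: generate the prefactor $|n|^{-k_2}$ by integrating by parts $k_2$ times in $x_2$ so that only qualitative Riemann--Lebesgue decay is required, and treat $x_1$ as one more parameter alongside $t$ so that uniformity comes from compactness of $S^1\times I^b$. Concretely, I would fix $j\in\{1,2\}$, $k_1+k_2=k$ and $m\le l$, and rewrite the partial Fourier coefficient via $k_2$ integrations by parts in $x_2$ (all boundary terms vanish by $2\pi$-periodicity) as
$$
\breve v_{j,n}^{(k_1,m)}(x_1,t) \;=\; \frac{1}{(in)^{k_2}}\,c_n(x_1,t),
\qquad c_n(x_1,t) \;=\; \frac{1}{2\pi}\int_0^{2\pi} g(x_1,x_2,t)\,e^{-inx_2}\,dx_2,
$$
where $g(x_1,x_2,t)=\partial_{x_1}^{k_1}\partial_{x_2}^{k_2}\partial_t^m v_j(x_1,x_2,t)$ is continuous on the compact product $S^1\times S^1\times I^b$. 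It then suffices to show that $c_n(x_1,t)\to 0$ uniformly in $(x_1,t)$ as $|n|\to\infty$, since multiplying by $|n|^{-k_2}$ gives precisely the bound asserted in the lemma.

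I would establish this uniform decay by the same three-step argument as in Lemma~\ref{lm:uniform}: for each fixed $(x_1^0,t_0)$ the ordinary Riemann--Lebesgue lemma supplies $N(\epsilon_0,x_1^0,t_0)$ with $|c_n(x_1^0,t_0)|<\epsilon_0/2$ whenever $|n|\ge N$; uniform continuity of $g$ on the compact set $S^1\times S^1\times I^b$ yields a radius $\delta(\epsilon_0,x_1^0,t_0)$ such that $|c_n(x_1,t)-c_n(x_1^0,t_0)|<\epsilon_0/2$ simultaneously for every $n$ whenever $(x_1,t)$ lies within $\delta$ of $(x_1^0,t_0)$ (this is immediate because the difference is bounded by the $L^\infty$ norm of $g(\cdot,\cdot,\cdot)-g(x_1^0,\cdot,t_0)$ on $\{x_1\}\times S^1\times\{t\}$). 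A finite subcover of $S^1\times I^b$ by such balls, together with the maximum of the corresponding indices, then produces a single $N(\epsilon_0)$ that works uniformly.

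No serious obstacle arises: the integration by parts just needs $v_j$ to have the natural joint smoothness ($C^k$ in $(x_1,x_2)$ and $C^l$ in $t$), which is already in force for the perturbation expansion of the Poincar\'e map at the start of this section, and the passage from pointwise to uniform Riemann--Lebesgue is exactly the compactness argument already used for Lemma~\ref{lm:uniform}, simply with the circle direction $x_1$ adjoined to the parameter cube $I^b$. The only bookkeeping subtlety is that the lemma allows $k_1=k$ (so $k_2=0$), in which case no integration by parts is performed and the statement reduces to uniform boundedness of $\partial_{x_1}^k\partial_t^m v_j$, which is trivial by continuity on a compact set.
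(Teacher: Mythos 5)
Your argument is correct and is essentially the paper's: the paper disposes of this lemma with the single remark that its proof is identical to that of Lemma~\ref{lm:uniform}, i.e.\ exactly the scheme you run --- generate the factor $|n|^{-k_2}$ from $k_2$ derivatives in $x_2$ (equivalently, integration by parts), then get uniformity from pointwise Riemann--Lebesgue decay plus uniform continuity and a finite subcover, with $x_1$ adjoined to the parameter cube $I^b$. One small caveat: your closing aside that the case $k_2=0$ ``reduces to uniform boundedness'' of $\partial_{x_1}^{k}\partial_t^{m}v_j$ is not quite right, since the required bound $|{\breve v_{j,n}}^{(k,m)}(x_1,t)|<\epsilon_0$ for all large $|n|$ still needs the uniform Riemann--Lebesgue decay of $c_n$ rather than mere boundedness --- but your main argument already supplies exactly that, so the proof stands as written.
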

The proof of this lemma is identical to the proof of
Lemma~\ref{lm:uniform}.

Notice, that this lemma implies that for any $\epsilon_0>0$ there is
$N$ such that for all $n\in \Z$, $|n|>N$ one has
$$
|\tilde f_n| < \epsilon_0 |n|^{-k}.
$$

Using these settings the proofs of the analogs of
Theorems~\ref{thr:iic}, \ref{thr:ck-ic}, \ref{thr:cknull} and \ref{thr:ckporous} go
along the same lines as before. For example, 
as the perturbed family in the proof of Theorem~\ref{thr:ck-ic} one should consider
\begin{eqnarray*}
\hat v_j(x_1,x_2,t)&=&v_j(x_1,x_2,t) - \sum_{m=1}^{d+1}
(\breve v_{j,-mN}(x_1,t) e^{-imNx_2}+\breve v_{j,mN}(x_1,t) e^{imNx_2}) \\
&&+ \epsilon N^{-k}(d+1)^{-k}\sin(N(d+1)x_2).
\end{eqnarray*}

In the case of Theorem~\ref{thr:comeganull} the measure $\mu$ is
defined as the image of the measure $\tau$ under the map $\phi :
Q^{\aleph_0} \to C^\omega_\delta (T^2) \times  C^\omega_\delta (T^2)$
defined by
$$
\phi((w_1,w_2,\ldots))=\left(0,\sum_{n=1}^\infty e^{-3/2\delta
    n}(w_{n}e^{inx_2}+\bar w_n e^{-inx_2})\right).
$$
Again, the rest of the proof can be easily adjusted.

\bibliographystyle{alpha}
\bibliography{circle_infi}

\end{document}